%%%%%%%%%%%%%%%%%%%%% version 22. Juli 2009 %%%%%%%%%%%%%%%%%%%%%%%%%%%%%%%
%%%%%%%%%%%%%%%%%%%%%%%%%%%%%%%%%%%%%%%%%%%%%%%%%%%%%%%%%%%%%%%%%%%%%%%%%%%%
\documentclass[12pt]{article}

% small captions
\usepackage[small]{titlesec}

% helps placing tables
\usepackage{float}

\usepackage{graphicx}

\usepackage{amsmath,amssymb}

% for rowspacing in tabulars
\usepackage{array}

\newcommand{\mathscr}[1]{\ensuremath{\mathbf{#1}}}

% theorem environments
\usepackage{amsthm}
\numberwithin{equation}{section}
\numberwithin{figure}{section}
%\swapnumbers
\theoremstyle{plain}
\newtheorem {satz}{Theorem}[section]

\newtheorem {prop}[satz]{Proposition}
\newtheorem {kor}[satz]{Corollary}
\newtheorem {conj}[satz]{Conjecture}
\newtheorem {problem}{Problem}
\theoremstyle{definition}
\newtheorem {deff}[satz]{Definition}
\theoremstyle{remark}
\newtheorem {remark}[satz]{Remark}

\begin{document}
%%%%%%%%%%%%%%%%%%%%%%%%%%%%%%%%%%%%%%%%%%%%%%%%%%%%%%%%%%%%%%%%%%%%%%%%%%%%%%%
%%%%%%%%%%%%%%%%%%%%%%%%%%%%%%%%%% TITLE %%%%%%%%%%%%%%%%%%%%%%%%%%%%%%%%%%%%%%
%%%%%%%%%%%%%%%%%%%%%%%%%%%%%%%%%%%%%%%%%%%%%%%%%%%%%%%%%%%%%%%%%%%%%%%%%%%%%%%
% don't want date printed
\date{}

\title{\Large {\bf Combinatorial properties of the $K3$ surface: 
Simplicial blowups and slicings}}

\author{Jonathan Spreer \& Wolfgang K\"{u}hnel}

\maketitle
%%%%%%%%%%%%%%%%%%%%%%%%%%%%%%%%%%%%%%%%%%%%%%%%%%%%%%%%%%%%%%%%%%%%%%%%%%%%%%%
%%%%%%%%%%%%%%%%%%%%%%%%%%%%%%%%% ABSTRACT %%%%%%%%%%%%%%%%%%%%%%%%%%%%%%%%%%%%
%%%%%%%%%%%%%%%%%%%%%%%%%%%%%%%%%%%%%%%%%%%%%%%%%%%%%%%%%%%%%%%%%%%%%%%%%%%%%%%
\subsubsection*{\centering Abstract}
{\small
The $4$-dimensional abstract Kummer variety $K^4$ with 16 nodes leads to the $K3$ surface by resolving the 16 singularities. Here we present a simplicial realization of this minimal resolution. Starting with a minimal $16$-vertex triangulation of $K^4$ we resolve its $16$ isolated singularities -- step by step -- by simplicial blowups. As a result we obtain a $17$-vertex triangulation of the standard PL $K3$ surface. A key step is the construction of a triangulated version of the mapping cylinder of the Hopf map from the real projective 3-space onto the 2-sphere with the minimum number of vertices. Moreover we study simplicial Morse functions and the changes of their levels between the critical points. In this way we obtain slicings through the $K3$ surface of various topological types.} \\

{\small \textbf{MSC 2000: } Primary 57Q15; % triangulating manifoldis
Secondary 14J28, % $K3$ surfaces and Enriques surfaces
14E15, % Global theory and resolution of singularities
57Q25, % Comparison of PL-structures
52B70 % Kummer variety

\textbf{Key words:} intersection form, $K3$ surface, Kummer variety, combinatorial manifold, combinatorial pseudo manifold, resolution of singularities, simplicial Hopf map.}

%%%%%%%%%%%%%%%%%%%%%%%%%%%%%%%%%%%%%%%%%%%%%%%%%%%%%%%%%%%%%%%%%%%%%%%%%%%%%%%
%%%%%%%%%%%%%%%%%%%%%%%%%%%%%%% INTRODUCTION %%%%%%%%%%%%%%%%%%%%%%%%%%%%%%%%%%
%%%%%%%%%%%%%%%%%%%%%%%%%%%%%%%%%%%%%%%%%%%%%%%%%%%%%%%%%%%%%%%%%%%%%%%%%%%%%%%
\section{Introduction}
\label{klassTheorie}
Triangulations of manifolds with few vertices have been a growing subject of research during the last years. This is due to new computer facilities which allow calculations and even computer experiments with a list of simplices on, say, up to 50 vertices or more. Here we are dealing with {\it combinatorial $d$-manifolds} which are $d$-dimensional simplicial complexes such that the link of every $i$-simplex is a triangulated $(d-i-1)$-dimensional standard PL-sphere. For a {\it combinatorial $d$-pseudo manifold with isolated singularities} we require that the link of each vertex is a combinatorial $(d-1)$-manifold, not necessarily a sphere. Not all triangulated pseudo manifolds satisfy this property. It turned out that there is a triangulated 5-sphere with only 20 vertices which is not combinatorial, see \cite{BjornerLutz}. This example is not even a combinatorial pseudo manifold.

\medskip
The Problem of finding a combinatorial version of an abstract $d$-pseudo manifold is not trivial. Especially, if some additional properties such as vertex minimality is required. It is well known that there are the following operations in the class of combinatorial manifolds in order to solve this problem: {\it Products} and {\it connected sums}. The products require a simplicial subdivision of prisms but that is available. In algebraic geometry there is a third operation on a certain type of pseudo manifolds, namely, the {\it resolution of singularities}. A fourth operation would be a combinatorial version of {\it Dehn twists}. If these could be applied to simply connected combinatorial 4-manifolds we could make progress towards a solution of some interesting problems:

\begin{problem}
	\label{prob2}
	Find a pair of orientable PL $d$-manifolds $(M_1,M_2)$ such that
	\begin{description}
		\item[(i)] $M_1$ and $M_2$ are not homeomorphic,
		\item[(ii)] there are combinatorial triangulations of $M_1$ and $M_2$ with $n$ vertices but not with $n-1$ vertices,
		\item[(iii)] the $f$-vector of such an $n$-vertex triangulation is unique for both $M_1$ and $M_2$.
	\end{description}
	The entries of the $f$-vector are defined as the numbers $f_i$ of $i$-dimensional simplices of the triangulation.
\end{problem}

\begin{problem}
	\label{prob1}
	Find two concrete combinatorial triangulations of a $4$-manifold such that the underlying PL manifolds are homeomorphic but not PL homeomorphic. It is known that some compact topological $4$-manifolds admit exotic PL structures. Furthermore any combinatorial triangulation induces a unique PL structure and thus a unique smooth structure.
\end{problem}

\medskip
Concerning Problem \ref{prob2} there are pairs of non-orientable and orientable surfaces with the same minimum number of vertices, e.g., the two surfaces with $\chi = -10$ admit triangulations with the $f$-vector $(12,66,44)$ but no smaller triangulations. Moreover, the existence of pairs of non-homeomorphic lens spaces with the same minimum number of vertices is known due to Brehm and Swiatkowski \cite{BrehmSwiatkowski}. However, so far no such pair of concrete combinatorial manifolds was constructed. Concerning Problem \ref{prob1} it is well known that in a topological classification of simply connected 4-manifolds the relevant pieces are $\mathbb{C}P^2$ (with two orientations), $S^2 \times S^2$ and the $K3$ surface (with two orientations). However, for topological $4$-manifolds it can happen that there are possibly many distinct PL-structures. There is a method to construct exotic PL-structures on $4$-manifolds using {\it Akbulut corks}: Akbulut and Yasui investigated bounded submanifolds of a $4$-manifold $M$. These so-called corks can be cut out and glued back into the original manifold, thus changing the PL-type of $M$ (see \cite{Akbulut91FakeCompContr4Mfld} and \cite{Akbulut08CorksPlugsExoticStruc}). However, applying Akbulut corks to combinatorial manifolds requires more experiments. For $\mathbb{C}P^2$ and $S^2 \times S^2$ we have standard triangulations. For the $K3$ surface we have one optimal triangulation with the minimum number of 16 vertices \cite{Casella01TrigK3MinNumVert} but so far the PL type has not finally been identified. Presumably it is the standard structure of the classical $K3$ surface.

In this article we describe a purely combinatorial version of resolving ordinary nodes or double points in real dimension 4. In particular we describe this procedure for the $K3$ surface as a resolution of the Kummer variety with 16 nodes. ``Purely combinatorial'' here means that we are dealing with simplicial complexes (or subdivisions of such) with a relatively small number of vertices such that topological properties or modifications can be recognized or carried out by an efficient computer algorithm. The construction itself is fairly general. We are going to illustrate it for the example of the $K3$ surface as a desingularization of what we call a {\it Kummer variety}, following \cite{Spanier1956}. In particular we describe a straightforward and ``canonical'' procedure how a concrete triangulation of the $K3$ surface with a small number of vertices and with the classical PL structure can be obtained. As we will see in Chapter \ref{other} this procedure also gives some insights to Problem \ref{prob2}. In principle such a procedure seems to be possible in any even dimension.

\bigskip
For all this, computer algorithms are employed and implemented in the GAP-system \cite{GAP4}. Here, a key operation is the concept of {\it bistellar flips} due to Pachner \cite{Pachner1987} that establishes PL-homeomorphism on a combinatorial level. A GAP program due to Bj\"orner and Lutz \cite{BjornerLutz} implements a heuristical algorithm which reduces the number of vertices of a given combinatorial manifold without changing its PL-type. Since this process is not deterministic its character is rather experimental and needs a lot of computer calculation. Nonetheless we can use this concept together with some theoretical lower bounds to get closer to a solution of

\begin{problem}
	\label{prob3}
	For any given abstract compact PL $d$-manifold find the minimum number $n$ of vertices for a combinatorial triangulation of it, and find out which topological invariants are related to this number.
	
	For pseudo manifolds admitting some combinatorial triangulation we have the same problem.
\end{problem}

%%%%%%%%%%%%%%%%%%%%%%%%%%%%%%%%%%%%%%%%%%%%%%%%%%%%%%%%%%%%%%%%%%%%%%%%%%%%%%%
%%%%%%%%%%%%%%%%%%%%%%%%%%%%%%%% K^4 AND K3 %%%%%%%%%%%%%%%%%%%%%%%%%%%%%%%%%%%
%%%%%%%%%%%%%%%%%%%%%%%%%%%%%%%%%%%%%%%%%%%%%%%%%%%%%%%%%%%%%%%%%%%%%%%%%%%%%%%
\section{The Kummer variety and the $K3$ surface}
\label{sec:k4andk3}

An abstract $d$-dimensional Kummer variety $ K^d = \mathbb{T}^d \big/_{x \sim -x}$ can be interpreted as the $d$-dimensional torus modulo involution \cite{Spanier1956}. It is a $d$-dimensional {\it flat orbifold} in the sense that the neighborhood of any point of $K^d$ is a quotient of Euclidean $d$-space by an orthogonal group. Topologically $K^d$ can be seen as a pseudo manifold with $2^d$ isolated singularities which are the fixed points of the involution. A typical neighborhood of a singularity is a cone over a real projective $(d-1)$-space where the apex represents the singularity. Thus, any combinatorial triangulation of $ K^d $ needs at least $ 2^d $ vertices as a kind of {\it absolute vertices} \cite{F'ary1977}. In more concrete terms a series of minimal triangulations of $K^d$ for any $d\geq3$ has been given in \cite{Kuhnel1986}. These combinatorial pseudo manifolds are $2$-neighborly (i.e., any two vertices are joined by an edge) and highly symmetric with a transitive automorphism group of order $ (d+1)! \cdot 2^d $. Moreover they contain a specific combinatorial real projective space $\mathbb{R}P^{d-1}$ with $2^d-1$ vertices as each vertex link. This vertex link happens to coincide with a 2-fold non-branched quotient of the vertex link of a series of combinatorial $d$-tori with $2^{d+1}-1$ vertices \cite{Kuhnel1988} which presumably has the minimum possible number of vertices among all combinatorial $d$-tori.

\medskip
In particular we have a minimal $2$-neighborly $16$-vertex triangulation of the $4$-dimensional Kummer variety which will be denoted by $ (K^4)_{16} $. A few of its properties are the following: The $f$-vector is given by $f = (16,120,400,480,192)$, the Euler characteristic is $ \chi (K^4) = 8 $, and the integral homology groups are \begin{equation} H_{*}(K^4) = (\mathbb{Z},0,\mathbb{Z}^6 \oplus (\mathbb{Z}_2)^5,0,\mathbb{Z}). \end{equation} Its intersection form is even of rank $6$ and signature $0$. We use an integer vertex labeling ranging from $1$ to $16$. The automorphism group of order $5! \cdot 2^4 = 1920$ is generated by two permutations as follows
\begin{eqnarray}
	& \langle &(1,7,12)(2,8,11)(3,10,16)(4,9,15),\nonumber \\ &&(1,9,10,14,16,8,7,3)(2,13,12,6,15,4,5,11) \quad \rangle .\nonumber
\end{eqnarray}
The complex coincides with the orbit $(1,2,4,8,16)_{192}$ (cf.\ \cite{Kuhnel1986} where the labeling is chosen as $0,1,2,\ldots,15$ instead of $1,2,3,\ldots,16$).

\medskip
The $K3$ surface, on the other hand, is a prime (i.e.,\ indecomposable by non-trivial connected sums, see \cite{Donaldson1986}) compact oriented connected and simply connected $4$-manifold, admitting a unique smooth or PL-structure. By Freedman's theorem \cite{Freedman1982} it is, up to homeomorphism, uniquely determined by its intersection form. The Euler characteristic is $ \chi (K3) = 24 $, the integral homology groups are
\begin{equation}
	H_{*}(K3) = (\mathbb{Z},0,\mathbb{Z}^{22},0,\mathbb{Z})
\end{equation}
and the intersection form is even of rank $22$ and signature $16$. In a suitable basis it is represented by the unimodular matrix
\begin{equation}
	\mathbb{E}_8 \oplus \mathbb{E}_8 \oplus 3 \begin{pmatrix} 0&1\\1&0\\ \end{pmatrix}
\end{equation}
where $\mathbb{E}_8$ is given by
{\small \begin{equation}
	\mathbb{E}_8 = 
	\begin{pmatrix}
		2&-1&0&0&0&0&0&0 \\
		-1&2&-1&0&0&0&0&0 \\
		0&-1&2&-1&0&0&0&0 \\
		0&0&-1&2&-1&0&0&0 \\
		0&0&0&-1&2&-1&0&-1 \\
		0&0&0&0&-1&2&-1&0 \\
		0&0&0&0&0&-1&2&0 \\
		0&0&0&0&-1&0&0&2 \\
	\end{pmatrix}.
\end{equation}}
This makes the $K3$ surface distinguished from the topological point of view. Also from the combinatorial point of view this 4-manifold is fairly special since the data $n=16, \chi = 24$ coincides with the case of equality in the generalized Heawood inequality
\begin{equation}\label{eq:Heawood}
	{{n-4}\choose 3} \geq 10(\chi(M)-2)
\end{equation}
which holds for any combinatorial $n$-vertex triangulation of any compact 4-manifold $M$, see \cite{Kuehnel94ManSkelConvPolyt}, \cite[4B]{Kuehnel95TightPolySubm}. Inequality (\ref{eq:Heawood}) is also a partial solution to Problem \ref{prob3} in the introduction. Equality can occur only for 3-neighborly triangulations, i. e.\ for which any triple of vertices determines a triangle of the triangulation. Consequently, the $f$-vector has to start with $\big(n,{n\choose 2},{n \choose 3}\big)$ in this case. In other words: Any combinatorial triangulation of the $K3$ surface has at least 16 vertices (the same number as required for the Kummer variety $K^4$), and one with precisely 16 vertices must necessarily be 3-neighborly (or {\it super-neighborly}). Such a $3$-neighborly, vertex minimal $16$-vertex triangulation of a PL manifold homeomorphic with the $K3$ surface $(K3)_{16}$ was found by Casella and the second author in \cite{Casella01TrigK3MinNumVert}. The $ f $-vector is $f = (16,120, 560, 720, 288)$, observe the 3-neighborliness $f_2 = 560 = {16 \choose 3}$. Its automorphism group is isomorphic to the affine linear group $\operatorname{AGL}(1,16)$ and is generated by two permutations as follows:
\begin{eqnarray}
	 & \langle & (1,3,8,4,9,16,15,2,14,12,6,7,13,5,10),\nonumber \\ 
	 & & (1,11,16)(2,10,14)(3,12,13)(4,9,15)(5,7,8) \quad \rangle \nonumber
\end{eqnarray}
This group of order $16 \cdot 15 = 240$ acts 2-transitively on the set of vertices $(1, \ldots , 16)$ of $(K3)_{16}$. The triangulation $(K3)_{16}$ itself is defined as the union of the orbits $( 1, 2, 3, 8, 12 )_{240}$ and $( 1, 2, 5, 8, 14 )_{48}$ under this permutation group, see \cite{Casella01TrigK3MinNumVert} where the labeling is chosen as $0,1,2,\ldots,15$ instead of $1,2,3,\ldots,16$.

%%%%%%%%%%%%%%%%%%%%%%%%%%%%%%%%%%%%%%%%%%%%%%%%%%%%%%%%%%%%%%%%%%%%%%%%%%%%%%%
%%%%%%%%%%%%%%%%%%%%%%%%%%% HOPF SIGMA PROCESS %%%%%%%%%%%%%%%%%%%%%%%%%%%%%%%%
%%%%%%%%%%%%%%%%%%%%%%%%%%%%%%%%%%%%%%%%%%%%%%%%%%%%%%%%%%%%%%%%%%%%%%%%%%%%%%%
\section{The Hopf $\sigma$-process}
\label{sec:blowups}

By the {\it Hopf $\sigma$-process} we mean the blowing up process of a point and, simultaneously, the resolution of nodes or ordinary double points of a complex algebraic variety. This was described by H.~Hopf in \cite{Hopf1951}, compare \cite{Hirzebruch1953} and \cite{Hauser2000}. From the topological point of view the process consists of cutting out some subspace and gluing in some other subspace. In complex algebraic geometry one point is replaced by the projective line $\mathbb{C}P^1 \cong S^2$ of all complex lines through that point. This is often called {\it blowing up} of the point. In general the process can be applied to non-singular 4-manifolds and yields a transformation of a manifold $M$ to $M \# (+\mathbb{C}P^2)$ or $M \# (-\mathbb{C}P^2)$, depending on the choice of an orientation. The same construction is possible for nodes or ordinary double points (a special type of singularities), and also the ambiguity of the orientation is the same for the blowup process of a node. Similarly it has been used in arbitrary even dimension by Spanier \cite{Spanier1956} as a so-called {\it dilatation process}. In the particular case of the 4-dimensional Kummer variety with 16 nodes a result of Hironaka \cite{Hironake1964} states that the singularities of a $4$-dimensional Kummer variety $K^4$ can be resolved into a smooth manifold, birationally equivalent to $K^4$. It is also well known that the minimal resolution of the $4$-dimensional Kummer variety is a $K3$ surface. This raises the question whether it is possible to carry out the Hopf $\sigma$-process in the purely combinatorial category. In this case one would have to cut out a certain neighborhood $A$ of each of the singularities and to glue in an appropriate simplicial complex $B$. 

\medskip
The spaces $A_i$ which have to be cut out are the following: The Kummer variety $K^4$ is the quotient of a $4$-dimensional torus $\mathbb{T}^4 = \mathbb{R}^4/_{\mathbb{Z}^4}$ by the central involution $\sigma : x \mapsto -x$ with precisely $16$ fixed points $ x_i $, $1 \leq i \leq 16 $. Let $X_i$ be a suitable neighborhood of $x_i$, then $\sigma$ acts on $ X = \mathbb{T}^4 \backslash \bigcup X_i $ without fixed points. The involution $\sigma$ acts as the antipodal map on each connected component of $\partial X$. Therefore the quotient of $\partial X_i$ is a projective space $\mathbb{R}P^3$ of dimension $3$ for each $ 1 \leq i \leq 16$, and the quotient of $X_i$ itself is a cone over it which we denote by $A_i$. Thus the quotient $ \widetilde{X} = X/_\sigma$ is a manifold having $16$ disjoint copies of $\mathbb{R}P^3$ as its boundary, and the quotient $K^4 = \mathbb{T}^4/_\sigma$ contains the disjoint subsets $A_1, \ldots, A_{16}$ as neighborhoods of the 16 singularities.

\medskip
The spaces $B_i$ which have to be glued in are the following: The Hopf map $h: S^3 \to \mathbb{C}P^1 $ induces a map $ \tilde{h}: \mathbb{R}P^3 \to \mathbb{C}P^1 $ since the Hopf map identifies antipodal pairs of points. We consider the cylinder $C = \mathbb{R}P^3 \times \left[ 0,1 \right] $ with the identification along the bottom of the cylinder by an equivalence relation $\sim$ defined by $(x,0) \sim (\tilde{h}(x),0)$. The quotient $\tilde{C} = C/_\sim $ is a manifold with boundary $\mathbb{R}P^3$. If we identify the boundary of $ \widetilde{X} $ with the union of the boundaries of $16$ copies $B_1, \ldots, B_{16}$ of $\widetilde{C}$ we get a closed manifold $S$. Alternatively each $B_i$ can be seen as a copy of $(\mathbb{C}P^2 \setminus B^4)/_\sigma$ where the involution $\tilde\sigma : \mathbb{C}P^2 \rightarrow \mathbb{C}P^2$ is defined by $\tilde\sigma[z_0,z_1,z_2] = [-z_0,z_1,z_2]$ with a fixed point set consisting of the point $[1,0,0]$ at the centre of the Ball $B^4$ and the polar projective line $z_0 = 0$. Spanier \cite{Spanier1956} proved that $S$ is in fact a $K3$ surface. Our main result is a simplicial realization of this construction. In principle one can expect that such a combinatorial construction is possible but there are a number of technical difficulties to overcome. One of the problems is to make the procedure efficient and to keep the number of vertices sufficiently small at each intermediate step.

%%%%%%%%%%%%%%%%%%%%%%%%%%%%%%%%%%%%%%%%%%%%%%%%%%%%%%%%%%%%%%%%%%%%%%%%%%%%%%%
%%%%%%%%%%%%%%%%%%%%%%%%%%%%% FROM K^4 TO K3 %%%%%%%%%%%%%%%%%%%%%%%%%%%%%%%%%%
%%%%%%%%%%%%%%%%%%%%%%%%%%%%%%%%%%%%%%%%%%%%%%%%%%%%%%%%%%%%%%%%%%%%%%%%%%%%%%%
\section{From $K^4$ to $K3$: Combinatorial resolution of the 16 singularities}
\label{sec:K4toK3}

Our goal is to construct a simplicial version of the $K3$ surface out of $(K^4)_{16}$ by a combinatorial version of Spanier's dilatation process. More precisely we find a way to cut out a certain simplicial version of $A_i$ and to glue in a simplicial version of $B_i$. We prefer a description of $B_i$ as the mapping cylinder of the Hopf map $\tilde{h}$, defined on $\mathbb{R}P^3 = \partial B^4/_{\tilde\sigma}$. In the combinatorial setting this is possible if the corresponding boundaries are combinatorially isomorphic, i. e. if they are equal up to a relabeling of the vertices. However, in general the boundaries are PL-homeomorphic but not combinatorially isomorphic. This is the main difficulty here. Therefore we need an efficient procedure to change the combinatorial type while preserving the PL-homeomorphism type of the manifold. One possibility of such a procedure is the well established concept of bistellar moves. Therefore we start with a short review on bistellar moves.

\begin{deff}{(Pachner's bistellar moves, see \cite{Pachner1987})\\}
	Let $M$ be a $d$-dimensional simplicial complex, and let $A$ be a $(d-i)$-face of $M$, where $ 0 \leq i \leq d $. If $\operatorname{lk}_{M}(A)$ is the boundary complex $\partial B$ of an $i$-simplex $B$ that is not a face of $M$, the operation $\Phi_{A}$ on $M$ defined by
	\begin{equation}
		\Phi_{A} ( M ) := (M\backslash (A \ast \partial B)) \cup (\partial A\ast B) 
	\end{equation}
	is called a \textit{bistellar $i$-move} or \textit{bistellar $i$-flip}. Similarly we have the {\it reverse bistellar $i$-flip} $\Phi_A^{-1}$ which can also be interpreted as a $(d-i)$-flip.
\end{deff}

\begin{figure}[h]
	\begin{center}
		\includegraphics[width=\textwidth]{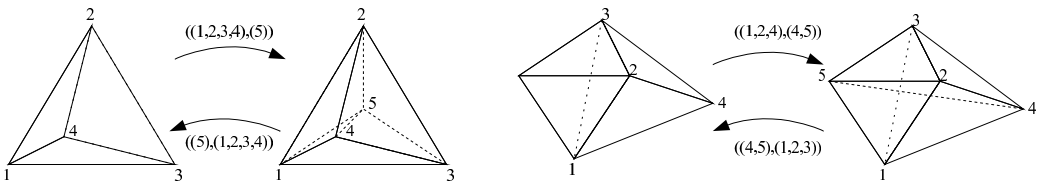}
	\end{center}
	\caption{$3$-dimensional bistellar moves}
	\label{bm}
\end{figure}

For $d=3$ all flips and reverse flips are shown in Fig. \ref{bm}. Two simplicial complexes $K$ and $L$ are called \textit{combinatorially isomorphic} or just \textit{isomorphic}, if they are equal up to a relabeling of the vertices. $K$ and $L$ are called \textit{bistellarly equivalent}, if there exists a sequence of bistellar flips from $K$ to a complex $K'$ such that $K'$ is isomorphic to $L$. This concept of bistellar flips has been a useful tool in several ways:

\begin{enumerate}
	\item By a theorem of Pachner \cite{Pachner1987} two combinatorial manifolds are PL homeomorphic if and only if the triangulations are bistellarly equivalent. \item From a practical point of view bistellar moves allow a reduction of the number of vertices of a given triangulation without changing its PL homeomorphism type. Many examples have been investigated and many small triangulations of $3$- and $4$-manifolds have been found using this technique, see \cite{Lutz1999} or \cite{Lutz2008b}.
	\item It is possible to decide whether two given complexes are PL homeomorphic by finding a connecting sequence of bistellar flips. This has been successful in many cases even if it cannot be excluded that the algorithm does not terminate. \item In particular it is possible to decide whether a given simplicial complex is a combinatorial manifold: One just has to examine the PL homeomorphism types of all links. \item These algorithms are implemented in a GAP-Program, see \cite{Lutz2008a}. 
\end{enumerate}

%%%%%%%%%%%%%%%%%%%%%%%%%%%%%%%%%%%%%%%%%%%%%%%%%%%%%%%%%%%%%%%%%%%%%%%%%%%%%%%
%%%%%%%%%%%%%%%%%%%%%%%%%%%%%%%% BOUNDED CP^2 %%%%%%%%%%%%%%%%%%%%%%%%%%%%%%%%%
%%%%%%%%%%%%%%%%%%%%%%%%%%%%%%%%%%%%%%%%%%%%%%%%%%%%%%%%%%%%%%%%%%%%%%%%%%%%%%%
\subsection*{A triangulated mapping cylinder of the Hopf map $\tilde{h} : \mathbb{R}P^3 \rightarrow \mathbb{C}P^1$ with the minimum number of vertices}
\label{boundedCP2}

From the topology of the complex projective plane it is fairly clear that one can construct a triangulation of $\mathbb{C}P^2$ from a triangulated version of the Hopf map $h : S^3 \rightarrow S^2$. Conversely, every triangulation of $\mathbb{C}P^2$ contains implicitly a triangulation of the Hopf map (possibly with collapsing of certain simplices) by considering a neighborhood of a triangulated $\mathbb{C}P^1$ inside the triangulation.

\begin{satz}[Madahar and Sarkaria \cite{Madahar2000}]
	There is a simplicial version of the Hopf map $h : S^3 \rightarrow S^2$ with the minimum number of $12$ vertices for $S^ 3$ which are mapped in triplets onto the $4$-vertex $S^2$. From this simplicial Hopf map one can reconstruct the unique $9$-vertex triangulation of $\mathbb{C}P^2$ which was known before, see {\rm \cite{Kuehnel95TightPolySubm}.}
\end{satz}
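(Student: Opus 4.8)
The plan is to prove the three assertions in turn: the lower bound of $12$ vertices, the explicit construction realizing it, and the reconstruction of the $9$-vertex $\mathbb{C}P^2$. For the lower bound I would argue as follows. Let $f \colon S^3 \to S^2$ be any simplicial map realizing the Hopf fibration, and let $v$ be a vertex of $S^2$. Then the point-preimage $f^{-1}(v)$ is exactly the subcomplex of $S^3$ spanned by those vertices mapped to $v$: a point in the relative interior of a simplex $\sigma$ has all barycentric coordinates positive, so its image can be the vertex $v$ only if every vertex of $\sigma$ is sent to $v$, the images of vertices again being vertices of $S^2$. Since $f$ realizes the fibration, this subcomplex is homeomorphic to the Hopf fibre $S^1$ and hence carries at least $3$ vertices. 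The vertex sets of the fibres over distinct vertices of $S^2$ are disjoint, and any triangulation of $S^2$ needs at least $4$ vertices; therefore $S^3$ has at least $3 \cdot 4 = 12$ vertices. Equality forces the $4$-vertex triangulation $\partial\Delta^3$ of $S^2$ together with a $3$-cycle over each of its vertices, which is precisely the asserted mapping ``in triplets''.

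For the construction I would build the $12$-vertex $S^3$ from the Hopf fibration over $\partial\Delta^3$ piece by piece, exploiting that the preimage of a disk is a solid torus. Label the three vertices over the $i$-th vertex of $S^2$ by a triple $T_i$, $i=1,2,3,4$. Over each of the $4$ triangles of $\partial\Delta^3$ the preimage is a triangulated solid torus $S^1 \times D^2$ on the $9$ vertices of the three incident triples; over each of the $6$ edges it is a triangulated annulus $S^1 \times I$ on $6$ vertices; and over each vertex it is the triangle $T_i$. The task is to choose these triangulated pieces so that they agree on the overlaps and so that the vertex assignment $T_i \mapsto i$ extends to a genuinely simplicial map $f$ on the whole complex. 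Concretely I would write down the full list of tetrahedra and verify that $f$ carries each of them onto a face of $\partial\Delta^3$.

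The verification then splits into two parts. First, that the resulting complex is a combinatorial $3$-sphere: I would check that every vertex link is a $2$-sphere and reduce the complex to $\partial\Delta^4$ by a sequence of bistellar flips (Pachner's theorem), or equivalently compute $H_*$ and $\pi_1$ and invoke the recognition of $S^3$. Second, that $f$ is honestly the Hopf map and not a nullhomotopic or otherwise inequivalent map onto $S^2$: for this I would compute the linking number in $S^3$ of the two fibre circles over a generic pair of vertices and confirm that it is $\pm 1$, i.e.\ that the Hopf invariant equals $1$. I expect this combined compatibility --- producing an honestly simplicial surjection onto the $4$-vertex $S^2$ whose domain is a genuine $3$-sphere and whose Hopf invariant is $1$, all on only $12$ vertices --- to be the main obstacle, since the scarcity of vertices leaves essentially no freedom in triangulating the solid tori and annuli, and a single bad choice destroys either the sphere property or the simpliciality of $f$.

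Finally, to reconstruct $\mathbb{C}P^2$ I would use that $\mathbb{C}P^2$ is the mapping cone of the Hopf map, that is, the mapping cylinder $M_f$ of $f$ with the $S^3$-end coned off. Triangulating $M_f$ (a prism-type subdivision interpolating between the $12$-vertex $S^3$ and the $4$-vertex $S^2$) and adjoining the cone yields a triangulation of $\mathbb{C}P^2$; after carrying out the evident collapses and a short sequence of bistellar flips I would identify the result with the known $9$-vertex triangulation $\mathbb{C}P^2_9$, whose uniqueness then closes the argument.
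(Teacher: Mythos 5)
First, a structural point: the paper does not prove this statement at all --- it is quoted as a theorem of Madahar and Sarkaria \cite{Madahar2000}, and the text following it only sketches the three-step procedure (triangulate the mapping cylinder of the Hopf map, cap it off with a simplicial $4$-ball, reduce by bistellar flips) that your final paragraph also describes. So the comparison here is really against the cited source, not against an argument contained in this paper. Your lower-bound argument is correct and complete as it stands: a simplicial map sends vertices to vertices; the preimage of a vertex $v$ of $S^2$ is the full subcomplex spanned by the vertices mapping to $v$ (a point with all barycentric coordinates positive can map to $v$ only if every vertex of its carrier does); each such preimage is a fibre circle and so needs at least $3$ vertices; the preimages over distinct vertices are disjoint; and $S^2$ needs at least $4$ vertices. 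Hence at least $12$ vertices, with equality forcing $\partial\Delta^3$ and triplet fibres. This matches the minimality argument of the cited work.

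The genuine gap is in the existence half, which is the actual substance of the theorem. An existence statement proved by explicit construction is not established by announcing that one \emph{would} write down the tetrahedra: the compatibility you yourself single out as the main obstacle --- triangulated solid tori over the four triangles of $\partial\Delta^3$, annuli over the six edges, and $3$-cycles over the four vertices, all agreeing on overlaps, assembling into a genuine combinatorial $3$-sphere, admitting a simplicial projection, and having Hopf invariant $\pm 1$ --- is precisely what Madahar and Sarkaria's paper supplies, and nothing in your text shows that such choices exist. With only three vertices per fibre and nine per solid torus there is essentially no slack, so existence cannot be taken on faith; it is conceivable a priori that no consistent choice works and the true minimum is larger, and only the exhibited complex rules this out. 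The same remark applies to the $\mathbb{C}P^2$ part: mapping cylinder plus cone plus bistellar reduction to the unique $9$-vertex triangulation is the right (and the cited) route, but until the intermediate complexes are exhibited --- or at least until the existence of a simplicial subdivision of the mapping cylinder compatible with $f$ is proved --- the reconstruction claim is likewise a program rather than a proof.
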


Roughly the procedure for the construction of a triangulated $\mathbb{C}P^2$ is the following: 

\begin{enumerate}
	\item Find a simplicial subdivision of the mapping cylinder of the Hopf map which is a triangulated $\mathbb{C}P^2$ minus an open 4-ball. 
	\item Close it up on top by a suitable simplicial 4-ball. 
	\item Finally reduce the number of vertices by bistellar flips as far as possible. 
\end{enumerate}

For our purpose here we can follow an analogous procedure:

\begin{enumerate}
	\item Find a simplicial version of the Hopf map 	$ \tilde{h} : \mathbb{R}P^3 \rightarrow S^2$.
	\item Find a simplicial subdivision 	of the mapping cylinder $\widetilde{C}$ which is nothing but a triangulated complex 	projective plane with one hole modulo the involution $\tilde\sigma$. 	There is one boundary component 	which is homeomorphic to $\mathbb{R}P^3$.
	\item Finally reduce the number of vertices by bistellar flips 	as far as possible. 	It is well known that any combinatorial triangulation of 	$\mathbb{R}P^3$ has at least 11 vertices \cite{Walkup1970}. 	Therefore 11 is the minimum also for the space we are looking for.
\end{enumerate}

\begin{satz} 
	There is an $11$-vertex triangulation 	of the mapping cylinder of the Hopf map 	$\tilde{h} : \mathbb{R}P^3 \rightarrow S^2$ such 	that all vertices and edges are contained in the boundary. 	This is the minimum possible number of vertices 	since it is the minimum already for the boundary. 
\end{satz}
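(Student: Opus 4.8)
The plan is to separate the two assertions—minimality and existence—and to treat existence by the three-step scheme (simplicial Hopf map, mapping cylinder, bistellar reduction) sketched just above the statement.

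For the lower bound I would argue as follows. Whatever triangulation we produce, its boundary is a combinatorial $\mathbb{R}P^3$, and by Walkup's result \cite{Walkup1970} every combinatorial triangulation of $\mathbb{R}P^3$ uses at least $11$ vertices. Since the statement requires every vertex of the complex to lie on the boundary, the total number of vertices is at least $11$; moreover, equality forces the boundary to be a vertex-minimal $11$-vertex $\mathbb{R}P^3$, no interior vertices can occur, and the condition that all edges lie on the boundary says precisely that the full $1$-skeleton coincides with that of the boundary. Thus the only real content is to exhibit such an economical filling and to certify its PL type.

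For existence I would first realize $\tilde{h}\colon \mathbb{R}P^3 \to S^2$ simplicially. Starting from the $12$-vertex simplicial Hopf map $h\colon S^3 \to S^2$ of Madahar and Sarkaria \cite{Madahar2000}, the free antipodal $\mathbb{Z}_2$-action (after a subdivision that makes it simplicial) descends to a simplicial map from a triangulated $\mathbb{R}P^3$ onto the $4$-vertex $S^2$, the fibres being mapped in triplets. I would then build the simplicial mapping cylinder of $\tilde{h}$—for instance by subdividing the prism $\mathbb{R}P^3 \times [0,1]$ and coning the fibres over the base, equivalently by triangulating $(\mathbb{C}P^2 \setminus B^4)/_{\tilde\sigma}$ directly. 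This yields a combinatorial $4$-manifold whose boundary is a combinatorial $\mathbb{R}P^3$ and whose interior is PL-homeomorphic to the required mapping cylinder, but with interior vertices and diagonal edges present and with more than $11$ vertices in total.

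The final step is the bistellar reduction. Because we are dealing with a manifold with boundary and must keep the boundary an $\mathbb{R}P^3$, I would restrict to flips compatible with the boundary (interior flips together with the boundary-respecting moves of Pachner's theorem) and run the heuristic flip program \cite{BjornerLutz,Lutz2008a} to drive the vertex count down to the theoretical minimum $11$, at the same time pushing all interior vertices onto the boundary and eliminating every interior edge until the $1$-skeleton equals that of the boundary. One then exhibits the resulting explicit list of $4$-simplices and checks, by inspecting all vertex links, that it is a combinatorial $4$-manifold with boundary; that its boundary is PL $\mathbb{R}P^3$ (by flipping it to Walkup's $11$-vertex model); and that the total space is the correct mapping cylinder (by producing a bistellar path back to the starting triangulation of the previous step, or by verifying the expected deformation retraction onto a triangulated $S^2$ together with the correct local normal data). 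I expect the main obstacle to be exactly the coupling of minimality with the $1$-skeleton-in-boundary condition: plain flip reduction lowers the vertex count but does not by itself force all edges into the boundary, so the reduction has to be steered toward this very special combinatorial form—or else an explicit economical filling of the minimal $\mathbb{R}P^3$ must be guessed and then certified—and certifying the PL type of a bounded $4$-manifold from so small a complex is itself the delicate part.
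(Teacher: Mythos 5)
Your lower-bound argument and your overall three-step scheme (simplicial $\tilde h$, mapping cylinder, bistellar reduction) coincide with the paper's, but your crucial first step has a genuine gap. You propose to obtain the simplicial Hopf map $\tilde h : \mathbb{R}P^3 \to S^2$ from the Madahar--Sarkaria $12$-vertex Hopf map by letting the antipodal involution act ``after a subdivision that makes it simplicial''. That parenthetical hides exactly the hard part of the theorem. First, no free simplicial involution can exist on a $12$-vertex $S^3$ at all: the quotient would be a $6$-vertex combinatorial $\mathbb{R}P^3$, contradicting Walkup's bound of $11$; so a subdivision is unavoidable, and it must be chosen invariant under a PL realization of the antipodal map, which is not a symmetry of the Madahar--Sarkaria complex to begin with. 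Second, subdividing the domain generally destroys simpliciality of the map: a new vertex lying in the relative interior of a simplex $\tau$ is sent into the relative interior of $h(\tau)$, so new vertices may be placed only on simplices that $h$ collapses to vertices (the fibre direction). Your subdivision would therefore have to be simultaneously (i) supported on the fibres, (ii) invariant under a free involution commuting with $h$, and (iii) such that this involution really is a simplicial automorphism of the subdivided complex --- none of which is addressed, and none of which is automatic.

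The paper circumvents this problem by not starting from the Madahar--Sarkaria map at all. It constructs a centrally symmetric $48$-vertex triangulation $S^3_{cs}$ by subdividing the boundary of the product of two hexagons; there the Hopf fibres are hexagons, and the antipodal map $a_{ij}\mapsto a_{i+3,j+3}$, $b_i \mapsto b_{i+3}$, $c_i \mapsto c_{i+3}$ is by construction a free simplicial automorphism compatible with an explicit simplicial Hopf map onto an $8$-vertex $S^2$ (a double pyramid over a hexagon). The quotient is then a $24$-vertex $\mathbb{R}P^3$ carrying the simplicial $\tilde h$, after which the triangulated mapping cylinder and the bistellar reduction to the explicit $11$-vertex complex $\mathscr{C}$ with $f$-vector $(11,51,107,95,30)$ proceed essentially as you describe, with the $1$-skeleton-in-boundary property read off from the final complex rather than forced in advance. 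To rescue your route you would have to prove that a fibrewise, equivariant subdivision of the Madahar--Sarkaria complex admitting the required free involution exists --- in effect reconstructing hexagonal fibres --- or else switch, as the paper does, to a construction in which the central symmetry is present from the start.
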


\begin{proof} 
On the boundary of $\mathbb{C}P^2 \setminus B^4$ the involution $\sigma$ coincides with $\tilde\sigma$ and leads to a twofold quotient map $S^3 \rightarrow \mathbb{R}P^3$. From this it is clear that a triangulated version of the Hopf map from $\mathbb{R}P^3$ onto $S^2$ requires a simplicial Hopf map $h : S^3 \rightarrow S^2$ which is centrally symmetric on $S^3$, i.e., which is invariant under $\sigma$. Therefore we need to construct a centrally symmetric triangulation of $S^3$ first. This should allow a simplicial fibration by Hopf fibres.

\medskip
For the construction we start with two regular hexagons (2-polytopes) $P_1,P_2$ in the plane and take the product polytope \cite[p.10]{Ziegler} $P := P_1 \times P_2.$ The vertices will be denoted by $a_{ij}$ where $i,j$ are ranging from 1 to 6. The facets of $P$ are $6 + 6$ hexagonal prisms where one of them has vertices $a_{11},\ldots,a_{16}$ on top and $a_{21},\ldots,a_{26}$ on bottom. The subcomplex $\partial P_1 \times \partial P_2 \subset \partial P$ is the standard $(6 \times 6)$-grid torus as a subcomplex decomposing $\partial P$ into two solid tori, one on each side of the torus. One of the squares has vertices $a_{11},a_{12},a_{21},a_{22}$, see Figure \ref{fig:gridtorus} where the labeling is simply $ij$ instead of $a_{ij}$. For a simplicial version we need to subdivide the prisms. In a first step we subdivide each square in the torus by the main diagonal, as indicated in Figure \ref{fig:gridtorus}. Next we introduce one extra vertex $b_i$ at the centre of the six prisms on one side and $c_i$ at the centre of the six prisms on the other side, $i = 1,\ldots,6$. That is to say, $b_1,\ldots, b_6$ represent the core of one solid torus and $c_1,\ldots, c_6$ the core of the other. Furthermore we introduce the pyramids from each $b_i$ and $c_i$ to the 12 triangles of each corresponding prism. Finally the remaining holes are closed by copies of the join of the edge between two adjacent centre vertices and the edge of a hexagon. Typical tetrahedra of this type are $\langle b_1 b_2 a_{11} a_{12}\rangle$ and $\langle c_1 c_2 a_{11} a_{21}\rangle$. This procedure is carried out for each of the two solid tori, see Figure \ref{fig:Hopfsphere}.

\medskip
Thus, we get a centrally symmetric triangulation $S^3_{cs}$ of the 3-sphere with $48$ vertices, with $2 \cdot (6 \cdot (12+6))=216$ tetrahedra and with an automorphism group $G$ of order 144.

\medskip
On this triangulation of $S^3$ we define the simplicial Hopf map $h_{cs}: S^3_{cs} \to S^2$ by the following identifications:

{\small \begin{eqnarray*}
	\{a_{ij}\ | \ j - i \equiv 1 \ (6)\} & \mapsto & a_1 \nonumber \\
	\{a_{ij}\ | \ j - i \equiv 2 \ (6)\} & \mapsto & a_2 \nonumber \\
	\{a_{ij}\ | \ j - i \equiv 3 \ (6)\} & \mapsto & a_3 \nonumber \\
	\{a_{ij}\ | \ j - i \equiv 4 \ (6)\} & \mapsto & a_4 \nonumber \\
	\{a_{ij}\ | \ j - i \equiv 5 \ (6)\} & \mapsto & a_5 \nonumber \\
	\{a_{ij}\ | \ j - i = 0 \} & \mapsto & a_6 \nonumber \\
	\{b_{i} \} & \mapsto & b \nonumber \\
	\{c_{i} \} & \mapsto & c \nonumber \\
\end{eqnarray*}}

\begin{figure}[p]
	\begin{center}
	 \includegraphics[width=0.8\textwidth]{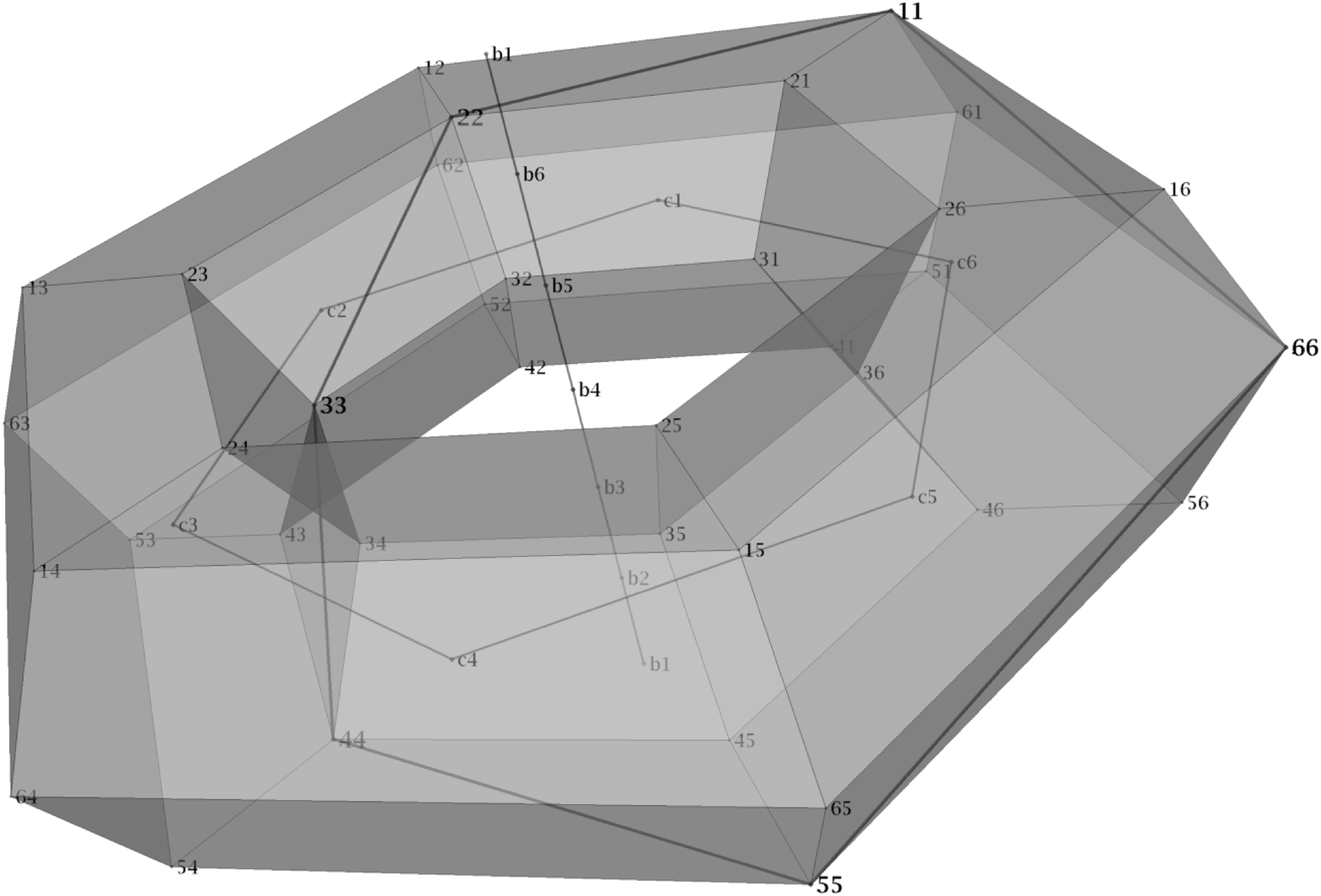}
	 \caption{A solid torus as half of $S^3_{cs}$ with two Hopf fibres. \label{fig:Hopfsphere}}
	 
	 \bigskip
	 \bigskip
	 \bigskip
	 \includegraphics[height=7cm]{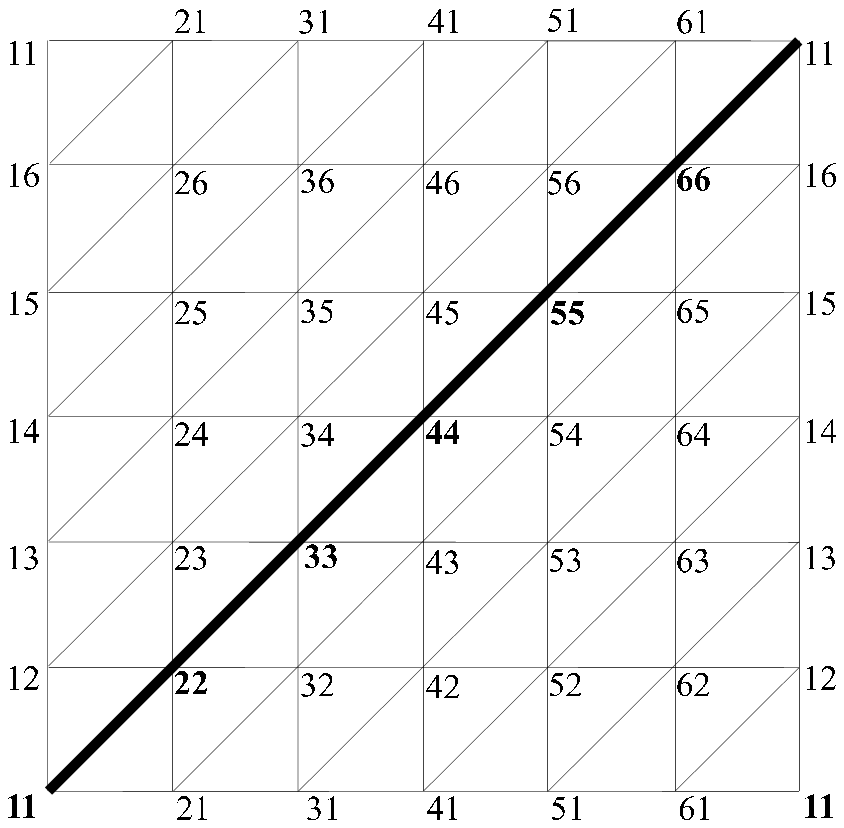}
	 \caption{Combinatorial $(6 \times 6)$-grid torus with Hopf fibres. \label{fig:gridtorus}}
	\end{center}
\end{figure}

The image is a simplicial $2$-sphere with $8$ vertices, namely, a double pyramid from $b$ and $c$ over the hexagon $a_1,a_2,a_3,a_4,a_5,a_6$. Note that the antipodal map $$\sigma: \ a_{ij} \mapsto a_{i+3,j+3}, \ \ b_{i} \mapsto b_{i+3}, \ \ c_{i} \mapsto c_{i+3} $$ (all indices taken modulo 6) is compatible with the simplicial Hopf map. By construction the quotient $\mathbf{P} = S^3_{cs} /_\sigma$ is a $24$-vertex triangulation of $\mathbb{R}P^3$. The automorphism group is a normal subgroup of $G$ of index $2$. It follows that this triangulated $\mathbb{R}P^3$ again allows a simplicial version of the Hopf map $\tilde{h}: \mathbf{P} \to S^2$.

The image of the torus in Figure \ref{fig:Hopfsphere} under $h$ forms the hexagon $(a_1,a_2,a_3,a_4,a_5,a_6)$, and each of the solid tori on each side gets mapped to a cone over it. A suitable simplicial decomposition $C$ of the cylinder $ \mathbf{P} \times \left[ 0, 1 \right] $ is compatible with the projection map $$\tilde{h} \times \{0\} : \mathbf{P} \times \{0\} \to S^2 \times \{0\}$$ $$\qquad \quad (x,0) \mapsto (\tilde{h}(x),0) $$ on the bottom of $ C $ and leads to a triangulated mapping cylinder $C/_\sim$. Its boundary is PL homeomorphic to the link of any vertex of $(K^4)_{16}$.

\medskip
For the purpose of a better handling of the blowup process we computed a reduced version of $ \mathscr{C} \cong_{PL} C /_\sim $ by bistellar flips. In this reduced version the boundary is isomorphic to a vertex minimal triangulation of $\mathbb{R}P^3$ with the $f$-vector $ (11,51,80,40) $. Moreover, the boundary $\partial \mathscr{C}$ is bistellarly equivalent to $\operatorname{lk}_{(K^4)_{16}} ( v )$ for any vertex $v$ which will be needed for the construction of a triangulated $K3$ surface. On 11 vertices $1,2,\ldots,11$ this complex is the following: 
\begin{eqnarray}
	\mathscr{C} & = & \big\langle( 1, 3, 5, 6, 11 ),( 2, 3, 5, 6, 11 ),
	( 2, 4, 5, 6, 11 ),( 2, 3, 6, 9, 11 ), ( 3, 6, 7, 9, 11 ),
	\nonumber \\
	&& ( 1, 3, 6, 7, 11 ), ( 6, 7, 8, 10, 11 ), ( 1, 6, 7, 10, 11 ),
	( 4, 6, 8, 9, 11 ), ( 6, 7, 8, 9, 11 ),
	\nonumber \\
	&& ( 2, 4, 6, 9, 11 ), ( 1, 2, 3, 5, 8 ),( 1, 2, 3, 5, 11 ), 
	( 1, 5, 7, 8, 9 ), ( 1, 2, 5, 7, 8 ),
	\nonumber \\
	&& ( 1, 4, 5, 9, 11 ),( 4, 5, 7, 9, 11 ), ( 1, 4, 5, 7, 9 ), 
	( 1, 2, 4, 5, 11 ), ( 1, 2, 4, 5, 7 ),
	\nonumber \\
	&& ( 3, 4, 7, 8, 11 ), ( 1, 3, 4, 7, 8 ), ( 1, 2, 3, 8, 11 ), 
	( 1, 3, 7, 8, 11 ),( 1, 2, 8, 10, 11 ),
	\nonumber \\
	&& ( 1, 7, 8, 10, 11 ), ( 1, 2, 7, 8, 10 ),( 4, 7, 8, 9, 11 ), 
	( 1, 4, 7, 8, 9 ), ( 1, 2, 4, 9, 11 ) \big\rangle
	\nonumber 
\end{eqnarray}
It has the $f$-vector $ (11,51,107,95,30) $, and its boundary $\partial \mathscr{C}$ contains the complete $1$-skeleton of $\mathscr{C}$. In particular, $\mathscr{C}$ is vertex minimal since the boundary $\mathbb{R}P^3$ requires already at least 11 vertices for any simplicial triangulation \cite{Walkup1970}.
\end{proof}

\begin{remark} 
	By starting with the product polytope of two $3k$-gons containing a $3k \times 3k$-grid torus one can similarly obtain a simplicial version of the Hopf map from the lens space $L(k,1)$ to $S^2$. Furthermore the same procedure as above can be carried out for the corresponding mapping cylinder.
\end{remark}

%%%%%%%%%%%%%%%%%%%%%%%%%%%%%%%%%%%%%%%%%%%%%%%%%%%%%%%%%%%%%%%%%%%%%%%%%%%%%%%
%%%%%%%%%%%%%%%%%%%%%%%%%%%% SIMPLICIAL BLOWUPS %%%%%%%%%%%%%%%%%%%%%%%%%%%%%%%
%%%%%%%%%%%%%%%%%%%%%%%%%%%%%%%%%%%%%%%%%%%%%%%%%%%%%%%%%%%%%%%%%%%%%%%%%%%%%%%
\subsection*{Simplicial blowups}
\label {ssec:scBlowups}

A PL version of the Hopf $\sigma$-process from Section~\ref{sec:k4andk3} is the following: We cut out the star of one of the singular vertices which is nothing but a cone over a triangulated $\mathbb{R}P^3$. This corresponds to the space $A_i$ above. The boundary of the resulting space is this triangulated $\mathbb{R}P^3$ and is therefore PL homeomorphic with the boundary of the triangulated mapping cylinder $\mathscr{C}$ from Section~\ref{sec:blowups} which corresponds to the space $B_i$. Then we cut out $A_i$ and glue in $B_i$ by an appropriate PL homeomorphism, as indicated in Section~\ref{sec:k4andk3}.

\medskip
For a combinatorial version with concrete triangulations, however, we face the problem that these two triangulations are not isomorphic. This implies that before cutting out and gluing in we have to modify the triangulations by bistellar flips until they coincide. This computation is provided by the GAP-program BISTELLAR which is available from \cite{Lutz2008a}. For more information about GAP, see \cite{GAP4}.

\begin{deff}{(Resolution of singularities in PL topology)\\}
	Let $v$ be a singular vertex of a PL $4$-pseudo manifold $M$ with a compact neighborhood $A$ of the type ``cone over an $\mathbb{R}P^3$'' and let $ \phi : \partial A \to \partial\mathscr{C}$ be a PL-homeomorphism. A \emph{PL resolution of the singularity $v$} is given by the following construction
	\begin{equation}
		M \mapsto \widetilde{M} := (M \setminus A^\circ) \cup_{\phi} \mathscr{C}.
	\end{equation}
	We will refer to this operation as a {\it PL blowup} of $v$.
\end{deff}

\begin{deff}{(Simplicial resolution of singularities)\\}
	Let $v$ be a vertex of a $4$-pseudo manifold $M$ whose link is isomorphic with the particular $11$-vertex triangulation of $\mathbb{R}P^3$ which is given by the boundary complex of the triangulated $\mathscr{C}$ above. Let $\psi : \operatorname{lk}(v) \rightarrow \partial\mathscr{C}$ denote such an isomorphism. A \emph{simplicial resolution of the singularity $v$} is given by the following construction
	\begin{equation}
		M \mapsto \widetilde{M} := (M \setminus \operatorname{star}(v)^\circ) \cup_{\psi} \mathscr{C}.
	\end{equation}
	We will refer to this operation as a {\it simplicial blowup} or just a {\it blowup} of $v$.
\end{deff}

\medskip
Since in either case both parts are glued together along their PL-homeomorphic boundaries, the resulting complex is closed, and the construction of $\widetilde{M}$ is well defined. $\widetilde{M}$ is a closed pseudo manifold and the number of singular points in $\widetilde{M}$ is the number of singular points in $M$ minus one. In particular we can apply this to $M = (K^4)_{16}$ and then repeat the procedure for the resulting spaces until the last singularity disappears. We can now prove the following main result:

\begin{satz}
	There is a $17$-vertex triangulation of the $K3$ surface $(K3)_{17}$ with the standard PL structure which can be constructed from $(K^4)_{16}$ by a sequence of bistellar flips and, in between, by $16$ simplicial blowups.
\end{satz}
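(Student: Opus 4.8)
The plan is to realise Spanier's resolution one node at a time, starting from $(K^4)_{16}$, whose $16$ singular vertices each carry a triangulated $\mathbb{R}P^3$ as link. I repeat sixteen times the two-step cycle ``flip until ready, then blow up''. In each round I single out one of the remaining singular vertices $v$ and apply bistellar flips to the ambient pseudo manifold until $\operatorname{lk}(v)$ is \emph{combinatorially isomorphic} to $\partial\mathscr{C}$ — not merely PL homeomorphic to it; I then perform the simplicial blowup of $v$, replacing $\operatorname{star}(v)$ by a relabelled copy of $\mathscr{C}$ glued along an isomorphism $\psi:\operatorname{lk}(v)\to\partial\mathscr{C}$. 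By the remark following the definition of the simplicial blowup this lowers the number of singular points by exactly one, so after sixteen rounds the complex is a closed combinatorial $4$-manifold.

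Two things must be checked for the cycle to run. First, every singular vertex has a triangulated $\mathbb{R}P^3$ as link, which is PL homeomorphic to the standard one and hence, by Pachner's theorem, bistellarly equivalent to $\partial\mathscr{C}$; this applies not only to the given links of $(K^4)_{16}$ but also to the new links produced by later rounds. A bistellar flip on $\operatorname{lk}(v)$ at a face $A$ is induced by the bistellar flip on the whole pseudo manifold at $v\ast A$, because $\operatorname{lk}_M(v\ast A)=\operatorname{lk}_{\operatorname{lk}(v)}(A)$; thus the required change of combinatorial type of the link is realised by flips of the ambient complex that fix $v$ and preserve its PL type. Since flips preserve PL type, each of the other singular vertices keeps an $\mathbb{R}P^3$ as its link and therefore remains a resolvable singularity, even when its combinatorial link is altered by flips carried out near $v$.

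Second, the blowup introduces no new singularity and does not spoil the manifold property away from $v$: the piece $\mathscr{C}$ is a manifold with boundary $\partial\mathscr{C}\cong\mathbb{R}P^3$ glued to $M\setminus\operatorname{star}(v)^\circ$ along this common boundary, so a non-singular vertex $w\in\operatorname{lk}(v)$ keeps a $3$-sphere as link (two $3$-balls glued along the $2$-sphere $\operatorname{lk}_{\partial\mathscr{C}}(w)$), while vertices not in $\operatorname{lk}(v)$ are untouched; this is exactly the drop of the singularity count by one recorded after the definition. Topologically $\mathscr{C}\cong_{PL}(\mathbb{C}P^2\setminus B^4)/\tilde\sigma$ is precisely the piece Spanier glues in during his dilatation process, so the sixteen blowups realise, faithfully on the PL level, the resolution of all sixteen nodes of the Kummer variety. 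By Spanier's theorem \cite{Spanier1956} the resulting closed PL manifold is a $K3$ surface, and since the $K3$ surface carries a unique PL structure, this triangulation has the standard PL structure.

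The genuinely hard part is not the topology but the combinatorics of matching the boundaries: before each blowup the link $\operatorname{lk}(v)$, which initially has $15$ and in general more than eleven vertices, must be reduced by bistellar flips to the \emph{specific} $11$-vertex complex $\partial\mathscr{C}$ rather than to some arbitrary $11$-vertex $\mathbb{R}P^3$, and since the flip algorithm is heuristic and non-deterministic this matching has to be found and certified by explicit computation in GAP \cite{Lutz2008a,GAP4}. Tracking the number of vertices through the interleaved flips and blowups and verifying at the end that every link is a PL sphere is likewise done algorithmically; the outcome is the announced $17$-vertex triangulation $(K3)_{17}$, one vertex above the minimum realised by $(K3)_{16}$, which I exhibit explicitly.
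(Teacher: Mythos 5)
Your overall scheme — interleave bistellar flips with sixteen simplicial blowups and certify the combinatorial matching by computer — is the same as the paper's, but your argument for \emph{why the end result is the $K3$ surface} has a genuine gap: you never control the orientation with which each copy of $\mathscr{C}$ is glued in. The gluing isomorphism $\psi:\operatorname{lk}(v)\to\partial\mathscr{C}$ is not essentially unique; composing it with an orientation-reversing self-homeomorphism of the boundary $\mathbb{R}P^3$ gives a second, inequivalent blowup (this is the PL analogue of the ambiguity $M\#(+\mathbb{C}P^2)$ versus $M\#(-\mathbb{C}P^2)$), and each of the two choices changes the signature of the intersection form by $+1$ or $-1$. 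Consequently an arbitrary sequence of sixteen simplicial blowups does \emph{not} automatically ``realise Spanier's dilatation process faithfully'': Spanier's theorem identifies the result as a $K3$ surface only when all sixteen resolutions are performed coherently. This is not a hypothetical worry — the paper's Section~\ref{other} shows that other patterns of orientation choices yield manifolds with signature $2n$ for $n\in\{0,\ldots,8\}$, for instance $11(\mathbb{C}P^2)\#11(-\mathbb{C}P^2)$ with signature $0$, rather than the $K3$ surface. The paper closes exactly this gap operationally: after each blowup it computes the intersection form $Q_{i+1}$ (via polymake) and verifies that the signature grows by one in absolute value, discarding the wrongly oriented gluing; only with this check does the final complex have an even intersection form of rank $22$ and signature $\pm 16$, after which Freedman's theorem and the uniqueness of the PL structure on the $K3$ surface identify it as claimed.

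A smaller point: your mechanism for realizing link flips ambiently — flipping $M$ at $v\ast A$ to induce the flip of $\operatorname{lk}(v)$ at $A$ — needs a caveat, since admissibility in the link only requires the new simplex $B$ to be absent from $\operatorname{lk}(v)$, whereas the ambient flip requires $B\notin M$, which can fail. The paper instead modifies the bounded complex $\widetilde{K}_i\setminus\operatorname{star}(v)^{\circ}$ so that its boundary becomes isomorphic to $\partial\mathscr{C}$, and it explicitly concedes that there is no a priori guarantee this is always possible: it was achieved in each of the sixteen steps by explicitly found flip sequences. Since you also delegate this matching to computation, this is a matter of stating the mechanism correctly rather than a structural flaw; the orientation issue above is the real gap.
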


The proof of the theorem is constructive and will be given in the form of an algorithm. From the construction it is clear that the resulting PL manifold is PL homeomorphic with the classical $K3$ surface, not only homeomorphic.

\medskip
Let $\widetilde{K}_i$, $ 0 \leq i \leq 16 $ be the $4$-dimensional Kummer variety after the $i$-th blowup. Since we have to modify its combinatorial type repeatedly our notation will not distinguish here between two different complexes after bistellar flips. Furthermore, let $\mathscr{C}$ be the bounded complex from Section~\ref{boundedCP2} and $Q_i$ the intersection form of $ \widetilde{K}_i $.

\medskip
We start with a singular vertex $v \in \widetilde{K}_{i} $. In general, its link is not isomorphic to $\partial \mathscr{C}$. Thus, we have to modify $\widetilde{K}_{i} \backslash \operatorname{star} (v)$ in a suitable way to yield a complex which allows a simplicial blowup with the space $\mathscr{C}$. This is accomplished by modifying $\partial(\widetilde{K}_{i} \backslash \operatorname{star} (v)) = \operatorname{lk}(v)$ with respect to the combinatorial structure of the complex. Even though in general we cannot claim that this must be possible in any case, in this particular case we were able to find fairly short sequences of bistellar moves realizing this modification at any of the 16 steps. The sequences were found using the approach from \cite{Lutz2008a}.

\medskip
Once $\partial(\widetilde{K}_{i} \backslash \operatorname{star} (v))$ is isomorphic to $\partial \mathscr{C} $ we can perform the simplicial blowup and gain $\widetilde{K}_{i+1}$ as indicated above. Note, that in each step we can perform the blowup in two significantly different ways coresponding to the choice of orientation of $\mathscr{C}$. For the verification of the choice of the right orientation we compute the intersection form $Q_{i+1}$ and check that $|\operatorname{sign}(Q_{i+1})| = | \operatorname{sign}(Q_{i}) + 1 | $ holds (note, that $Q(K^4)=0$ and $Q(K3)=\pm 16$).

\medskip
Due to the various modifications above, the resulting complex $ \widetilde{K}_{i+1} $ will be considerably bigger than $ \widetilde{K}_{i} $. Thus, we use bistellar flips to reduce it before repeating the same operation for the remaining singularities. In every step the signature of the intersection form and the second Betti number will increase by one and the number of singularities will decrease by one. Also, the torsion part of $H_2 ( K^4 )$ will gradually decline. This, however, depends on the order of the blowing up process of the singularities. It follows that the resulting complex is a triangulation of the $K3$ surface with the right intersection form and with the standard PL structure. The algorithm is written in GAP \cite{GAP4}. For the computation of the intersection form we use polymake \cite{Joswig00Polymake}.

The smallest complex (with respect to the $f$-vector) we were able to obtain by bistellar moves is a $17$-vertex version of the $K3$-surface which will be denoted by $(K3)_{17}$. Its facets as well as some basic properties are listed in Table \ref{tab:facetList}.

\begin{table}
	\begin{center}
		\tiny
		\begin{tabular}{lllllll}
			$(K3)_{17} = $&$\langle 1\,2\,3\,8\,13 \rangle$, &
			$\langle 1\,2\,3\,8\,14 \rangle$, &
			$\langle 1\,2\,3\,12\,13 \rangle$, &
			$\langle 1\,2\,3\,12\,15 \rangle$, &
			$\langle 1\,2\,3\,14\,15 \rangle$, &
			$\langle 1\,2\,4\,7\,13 \rangle$,\\
			
			&$\langle 1\,2\,4\,7\,15 \rangle$, &
			$\langle 1\,2\,4\,13\,15 \rangle$, &
			$\langle 1\,2\,5\,6\,9 \rangle$, &
			$\langle 1\,2\,5\,6\,14 \rangle$, &
			$\langle 1\,2\,5\,9\,17 \rangle$, &
			$\langle 1\,2\,5\,14\,15 \rangle$,\\
			
			&$\langle 1\,2\,5\,15\,17 \rangle$, &
			$\langle 1\,2\,6\,8\,14 \rangle$, &
			$\langle 1\,2\,6\,8\,15 \rangle$, &
			$\langle 1\,2\,6\,9\,16 \rangle$, &
			$\langle 1\,2\,6\,15\,17 \rangle$, &
			$\langle 1\,2\,6\,16\,17 \rangle$,\\
			
			&$\langle 1\,2\,7\,8\,11 \rangle$, &
			$\langle 1\,2\,7\,8\,15 \rangle$, &
			$\langle 1\,2\,7\,11\,13 \rangle$, &
			$\langle 1\,2\,8\,10\,11 \rangle$, &
			$\langle 1\,2\,8\,10\,13 \rangle$, &
			$\langle 1\,2\,9\,16\,17 \rangle$,\\
			
			&$\langle 1\,2\,10\,11\,13 \rangle$, &
			$\langle 1\,2\,12\,13\,15 \rangle$, &
			$\langle 1\,3\,4\,5\,8 \rangle$, &
			$\langle 1\,3\,4\,5\,17 \rangle$, &
			$\langle 1\,3\,4\,6\,10 \rangle$, &
			$\langle 1\,3\,4\,6\,12 \rangle$,\\
			
			&$\langle 1\,3\,4\,8\,9 \rangle$, &
			$\langle 1\,3\,4\,9\,10 \rangle$, &
			$\langle 1\,3\,4\,12\,17 \rangle$, &
			$\langle 1\,3\,5\,7\,11 \rangle$, &
			$\langle 1\,3\,5\,7\,14 \rangle$, &
			$\langle 1\,3\,5\,8\,16 \rangle$,\\
			
			&$\langle 1\,3\,5\,11\,16 \rangle$, &
			$\langle 1\,3\,5\,14\,15 \rangle$, &
			$\langle 1\,3\,5\,15\,17 \rangle$, &
			$\langle 1\,3\,6\,10\,12 \rangle$, &
			$\langle 1\,3\,7\,8\,11 \rangle$, &
			$\langle 1\,3\,7\,8\,14 \rangle$,\\
			
			&$\langle 1\,3\,8\,9\,12 \rangle$, &
			$\langle 1\,3\,8\,11\,16 \rangle$, &
			$\langle 1\,3\,8\,12\,13 \rangle$, &
			$\langle 1\,3\,9\,10\,12 \rangle$, &
			$\langle 1\,3\,12\,15\,17 \rangle$, &
			$\langle 1\,4\,5\,8\,16 \rangle$,\\
			
			&$\langle 1\,4\,5\,11\,16 \rangle$, &
			$\langle 1\,4\,5\,11\,17 \rangle$, &
			$\langle 1\,4\,6\,7\,12 \rangle$, &
			$\langle 1\,4\,6\,7\,15 \rangle$, &
			$\langle 1\,4\,6\,10\,15 \rangle$, &
			$\langle 1\,4\,7\,12\,13 \rangle$,\\
			
			&$\langle 1\,4\,8\,9\,16 \rangle$, &
			$\langle 1\,4\,9\,10\,14 \rangle$, &
			$\langle 1\,4\,9\,14\,16 \rangle$, &
			$\langle 1\,4\,10\,14\,16 \rangle$, &
			$\langle 1\,4\,10\,15\,16 \rangle$, &
			$\langle 1\,4\,11\,16\,17 \rangle$,\\
			
			&$\langle 1\,4\,12\,13\,17 \rangle$, &
			$\langle 1\,4\,13\,15\,16 \rangle$, &
			$\langle 1\,4\,13\,16\,17 \rangle$, &
			$\langle 1\,5\,6\,9\,13 \rangle$, &
			$\langle 1\,5\,6\,13\,14 \rangle$, &
			$\langle 1\,5\,7\,10\,12 \rangle$,\\
			
			&$\langle 1\,5\,7\,10\,14 \rangle$, &
			$\langle 1\,5\,7\,11\,12 \rangle$, &
			$\langle 1\,5\,9\,11\,13 \rangle$, &
			$\langle 1\,5\,9\,11\,17 \rangle$, &
			$\langle 1\,5\,10\,12\,14 \rangle$, &
			$\langle 1\,5\,11\,12\,13 \rangle$,\\
			
			&$\langle 1\,5\,12\,13\,14 \rangle$, &
			$\langle 1\,6\,7\,8\,14 \rangle$, &
			$\langle 1\,6\,7\,8\,15 \rangle$, &
			$\langle 1\,6\,7\,10\,12 \rangle$, &
			$\langle 1\,6\,7\,10\,16 \rangle$, &
			$\langle 1\,6\,7\,14\,16 \rangle$,\\
			
			&$\langle 1\,6\,9\,11\,13 \rangle$, &
			$\langle 1\,6\,9\,11\,14 \rangle$, &
			$\langle 1\,6\,9\,14\,16 \rangle$, &
			$\langle 1\,6\,10\,15\,16 \rangle$, &
			$\langle 1\,6\,11\,13\,14 \rangle$, &
			$\langle 1\,6\,15\,16\,17 \rangle$,\\
			
			&$\langle 1\,7\,10\,14\,16 \rangle$, &
			$\langle 1\,7\,11\,12\,13 \rangle$, &
			$\langle 1\,8\,9\,10\,11 \rangle$, &
			$\langle 1\,8\,9\,10\,12 \rangle$, &
			$\langle 1\,8\,9\,11\,16 \rangle$, &
			$\langle 1\,8\,10\,12\,13 \rangle$,\\
			
			&$\langle 1\,9\,10\,11\,14 \rangle$, &
			$\langle 1\,9\,11\,16\,17 \rangle$, &
			$\langle 1\,10\,11\,13\,14 \rangle$, &
			$\langle 1\,10\,12\,13\,14 \rangle$, &
			$\langle 1\,12\,13\,15\,16 \rangle$, &
			$\langle 1\,12\,13\,16\,17 \rangle$,\\
			
			&$\langle 1\,12\,15\,16\,17 \rangle$, &
			$\langle 2\,3\,4\,6\,12 \rangle$, &
			$\langle 2\,3\,4\,6\,16 \rangle$, &
			$\langle 2\,3\,4\,7\,14 \rangle$, &
			$\langle 2\,3\,4\,7\,16 \rangle$, &
			$\langle 2\,3\,4\,12\,14 \rangle$,\\
			
			&$\langle 2\,3\,5\,6\,12 \rangle$, &
			$\langle 2\,3\,5\,6\,16 \rangle$, &
			$\langle 2\,3\,5\,12\,16 \rangle$, &
			$\langle 2\,3\,7\,14\,16 \rangle$, &
			$\langle 2\,3\,8\,13\,14 \rangle$, &
			$\langle 2\,3\,9\,10\,13 \rangle$,\\
			
			&$\langle 2\,3\,9\,10\,14 \rangle$, &
			$\langle 2\,3\,9\,11\,14 \rangle$, &
			$\langle 2\,3\,9\,11\,16 \rangle$, &
			$\langle 2\,3\,9\,12\,13 \rangle$, &
			$\langle 2\,3\,9\,12\,16 \rangle$, &
			$\langle 2\,3\,10\,13\,14 \rangle$,\\
			
			&$\langle 2\,3\,11\,14\,16 \rangle$, &
			$\langle 2\,3\,12\,14\,15 \rangle$, &
			$\langle 2\,4\,5\,7\,10 \rangle$, &
			$\langle 2\,4\,5\,7\,11 \rangle$, &
			$\langle 2\,4\,5\,8\,10 \rangle$, &
			$\langle 2\,4\,5\,8\,12 \rangle$,\\
			
			&$\langle 2\,4\,5\,11\,12 \rangle$, &
			$\langle 2\,4\,6\,11\,12 \rangle$, &
			$\langle 2\,4\,6\,11\,16 \rangle$, &
			$\langle 2\,4\,7\,9\,14 \rangle$, &
			$\langle 2\,4\,7\,9\,15 \rangle$, &
			$\langle 2\,4\,7\,10\,13 \rangle$,\\
			
			&$\langle 2\,4\,7\,11\,16 \rangle$, &
			$\langle 2\,4\,8\,10\,12 \rangle$, &
			$\langle 2\,4\,9\,10\,13 \rangle$, &
			$\langle 2\,4\,9\,10\,14 \rangle$, &
			$\langle 2\,4\,9\,13\,15 \rangle$, &
			$\langle 2\,4\,10\,12\,14 \rangle$,\\
			
			&$\langle 2\,5\,6\,7\,10 \rangle$, &
			$\langle 2\,5\,6\,7\,12 \rangle$, &
			$\langle 2\,5\,6\,8\,10 \rangle$, &
			$\langle 2\,5\,6\,8\,14 \rangle$, &
			$\langle 2\,5\,6\,9\,16 \rangle$, &
			$\langle 2\,5\,7\,11\,12 \rangle$,\\
			
			&$\langle 2\,5\,8\,12\,14 \rangle$, &
			$\langle 2\,5\,9\,15\,16 \rangle$, &
			$\langle 2\,5\,9\,15\,17 \rangle$, &
			$\langle 2\,5\,12\,14\,15 \rangle$, &
			$\langle 2\,5\,12\,15\,16 \rangle$, &
			$\langle 2\,6\,7\,10\,13 \rangle$,\\
			
			&$\langle 2\,6\,7\,11\,12 \rangle$, &
			$\langle 2\,6\,7\,11\,13 \rangle$, &
			$\langle 2\,6\,8\,10\,15 \rangle$, &
			$\langle 2\,6\,10\,11\,13 \rangle$, &
			$\langle 2\,6\,10\,11\,17 \rangle$, &
			$\langle 2\,6\,10\,15\,17 \rangle$,\\
			
			&$\langle 2\,6\,11\,16\,17 \rangle$, &
			$\langle 2\,7\,8\,11\,15 \rangle$, &
			$\langle 2\,7\,9\,11\,14 \rangle$, &
			$\langle 2\,7\,9\,11\,15 \rangle$, &
			$\langle 2\,7\,11\,14\,16 \rangle$, &
			$\langle 2\,8\,10\,11\,15 \rangle$,\\
			
			&$\langle 2\,8\,10\,12\,14 \rangle$, &
			$\langle 2\,8\,10\,13\,14 \rangle$, &
			$\langle 2\,9\,11\,15\,17 \rangle$, &
			$\langle 2\,9\,11\,16\,17 \rangle$, &
			$\langle 2\,9\,12\,13\,16 \rangle$, &
			$\langle 2\,9\,13\,15\,16 \rangle$,\\
			
			&$\langle 2\,10\,11\,15\,17 \rangle$, &
			$\langle 2\,12\,13\,15\,16 \rangle$, &
			$\langle 3\,4\,5\,8\,17 \rangle$, &
			$\langle 3\,4\,6\,8\,9 \rangle$, &
			$\langle 3\,4\,6\,8\,11 \rangle$, &
			$\langle 3\,4\,6\,9\,15 \rangle$,\\
			
			&$\langle 3\,4\,6\,10\,15 \rangle$, &
			$\langle 3\,4\,6\,11\,13 \rangle$, &
			$\langle 3\,4\,6\,13\,16 \rangle$, &
			$\langle 3\,4\,7\,12\,14 \rangle$, &
			$\langle 3\,4\,7\,12\,17 \rangle$, &
			$\langle 3\,4\,7\,13\,16 \rangle$,\\
			
			&$\langle 3\,4\,7\,13\,17 \rangle$, &
			$\langle 3\,4\,8\,11\,17 \rangle$, &
			$\langle 3\,4\,9\,10\,15 \rangle$, &
			$\langle 3\,4\,11\,13\,17 \rangle$, &
			$\langle 3\,5\,6\,10\,12 \rangle$, &
			$\langle 3\,5\,6\,10\,15 \rangle$,\\
			
			&$\langle 3\,5\,6\,13\,15 \rangle$, &
			$\langle 3\,5\,6\,13\,16 \rangle$, &
			$\langle 3\,5\,7\,11\,15 \rangle$, &
			$\langle 3\,5\,7\,14\,15 \rangle$, &
			$\langle 3\,5\,8\,16\,17 \rangle$, &
			$\langle 3\,5\,9\,10\,15 \rangle$,\\
			
			&$\langle 3\,5\,9\,10\,17 \rangle$, &
			$\langle 3\,5\,9\,15\,17 \rangle$, &
			$\langle 3\,5\,10\,12\,16 \rangle$, &
			$\langle 3\,5\,10\,16\,17 \rangle$, &
			$\langle 3\,5\,11\,15\,16 \rangle$, &
			$\langle 3\,5\,13\,15\,16 \rangle$,\\
			
			&$\langle 3\,6\,7\,8\,9 \rangle$, &
			$\langle 3\,6\,7\,8\,15 \rangle$, &
			$\langle 3\,6\,7\,9\,15 \rangle$, &
			$\langle 3\,6\,8\,11\,15 \rangle$, &
			$\langle 3\,6\,11\,13\,15 \rangle$, &
			$\langle 3\,7\,8\,9\,13 \rangle$,\\
			
			&$\langle 3\,7\,8\,11\,15 \rangle$, &
			$\langle 3\,7\,8\,13\,14 \rangle$, &
			$\langle 3\,7\,9\,13\,17 \rangle$, &
			$\langle 3\,7\,9\,15\,17 \rangle$, &
			$\langle 3\,7\,12\,14\,15 \rangle$, &
			$\langle 3\,7\,12\,15\,17 \rangle$,\\
			
			&$\langle 3\,7\,13\,14\,16 \rangle$, &
			$\langle 3\,8\,9\,12\,13 \rangle$, &
			$\langle 3\,8\,10\,11\,16 \rangle$, &
			$\langle 3\,8\,10\,11\,17 \rangle$, &
			$\langle 3\,8\,10\,16\,17 \rangle$, &
			$\langle 3\,9\,10\,11\,14 \rangle$,\\
			
			&$\langle 3\,9\,10\,11\,16 \rangle$, &
			$\langle 3\,9\,10\,12\,16 \rangle$, &
			$\langle 3\,9\,10\,13\,17 \rangle$, &
			$\langle 3\,10\,11\,13\,14 \rangle$, &
			$\langle 3\,10\,11\,13\,17 \rangle$, &
			$\langle 3\,11\,13\,14\,16 \rangle$,\\
			
			&$\langle 3\,11\,13\,15\,16 \rangle$, &
			$\langle 4\,5\,7\,8\,13 \rangle$, &
			$\langle 4\,5\,7\,8\,16 \rangle$, &
			$\langle 4\,5\,7\,10\,13 \rangle$, &
			$\langle 4\,5\,7\,11\,16 \rangle$, &
			$\langle 4\,5\,8\,10\,15 \rangle$,\\
			
			&$\langle 4\,5\,8\,11\,12 \rangle$, &
			$\langle 4\,5\,8\,11\,17 \rangle$, &
			$\langle 4\,5\,8\,13\,15 \rangle$, &
			$\langle 4\,5\,9\,10\,13 \rangle$, &
			$\langle 4\,5\,9\,10\,15 \rangle$, &
			$\langle 4\,5\,9\,13\,15 \rangle$,\\
			
			&$\langle 4\,6\,7\,9\,12 \rangle$, &
			$\langle 4\,6\,7\,9\,15 \rangle$, &
			$\langle 4\,6\,8\,9\,14 \rangle$, &
			$\langle 4\,6\,8\,11\,14 \rangle$, &
			$\langle 4\,6\,9\,12\,14 \rangle$, &
			$\langle 4\,6\,11\,12\,14 \rangle$,\\
			
			&$\langle 4\,6\,11\,13\,17 \rangle$, &
			$\langle 4\,6\,11\,16\,17 \rangle$, &
			$\langle 4\,6\,13\,16\,17 \rangle$, &
			$\langle 4\,7\,8\,13\,16 \rangle$, &
			$\langle 4\,7\,9\,12\,14 \rangle$, &
			$\langle 4\,7\,12\,13\,17 \rangle$,\\
			
			&$\langle 4\,8\,9\,14\,16 \rangle$, &
			$\langle 4\,8\,10\,11\,12 \rangle$, &
			$\langle 4\,8\,10\,11\,15 \rangle$, &
			$\langle 4\,8\,11\,14\,15 \rangle$, &
			$\langle 4\,8\,13\,15\,16 \rangle$, &
			$\langle 4\,8\,14\,15\,16 \rangle$,\\
			
			&$\langle 4\,10\,11\,12\,15 \rangle$, &
			$\langle 4\,10\,12\,14\,16 \rangle$, &
			$\langle 4\,10\,12\,15\,16 \rangle$, &
			$\langle 4\,11\,12\,14\,15 \rangle$, &
			$\langle 4\,12\,14\,15\,16 \rangle$, &
			$\langle 5\,6\,7\,10\,12 \rangle$,\\
			
			&$\langle 5\,6\,8\,10\,15 \rangle$, &
			$\langle 5\,6\,8\,13\,14 \rangle$, &
			$\langle 5\,6\,8\,13\,15 \rangle$, &
			$\langle 5\,6\,9\,13\,16 \rangle$, &
			$\langle 5\,7\,8\,9\,13 \rangle$, &
			$\langle 5\,7\,8\,9\,17 \rangle$,\\
			
			&$\langle 5\,7\,8\,16\,17 \rangle$, &
			$\langle 5\,7\,9\,10\,13 \rangle$, &
			$\langle 5\,7\,9\,10\,17 \rangle$, &
			$\langle 5\,7\,10\,14\,16 \rangle$, &
			$\langle 5\,7\,10\,16\,17 \rangle$, &
			$\langle 5\,7\,11\,15\,16 \rangle$,\\
			
			&$\langle 5\,7\,14\,15\,16 \rangle$, &
			$\langle 5\,8\,9\,12\,13 \rangle$, &
			$\langle 5\,8\,9\,12\,17 \rangle$, &
			$\langle 5\,8\,11\,12\,17 \rangle$, &
			$\langle 5\,8\,12\,13\,14 \rangle$, &
			$\langle 5\,9\,11\,12\,13 \rangle$,\\
			
			&$\langle 5\,9\,11\,12\,17 \rangle$, &
			$\langle 5\,9\,13\,15\,16 \rangle$, &
			$\langle 5\,10\,12\,14\,16 \rangle$, &
			$\langle 5\,12\,14\,15\,16 \rangle$, &
			$\langle 6\,7\,8\,9\,16 \rangle$, &
			$\langle 6\,7\,8\,14\,16 \rangle$,\\
			
			&$\langle 6\,7\,9\,12\,16 \rangle$, &
			$\langle 6\,7\,10\,13\,17 \rangle$, &
			$\langle 6\,7\,10\,16\,17 \rangle$, &
			$\langle 6\,7\,11\,12\,13 \rangle$, &
			$\langle 6\,7\,12\,13\,17 \rangle$, &
			$\langle 6\,7\,12\,16\,17 \rangle$,\\
			
			&$\langle 6\,8\,9\,14\,16 \rangle$, &
			$\langle 6\,8\,11\,13\,14 \rangle$, &
			$\langle 6\,8\,11\,13\,15 \rangle$, &
			$\langle 6\,9\,11\,12\,13 \rangle$, &
			$\langle 6\,9\,11\,12\,14 \rangle$, &
			$\langle 6\,9\,12\,13\,16 \rangle$,\\
			
			&$\langle 6\,10\,11\,13\,17 \rangle$, &
			$\langle 6\,10\,15\,16\,17 \rangle$, &
			$\langle 6\,12\,13\,16\,17 \rangle$, &
			$\langle 7\,8\,9\,12\,16 \rangle$, &
			$\langle 7\,8\,9\,12\,17 \rangle$, &
			$\langle 7\,8\,12\,16\,17 \rangle$,\\
			
			&$\langle 7\,8\,13\,14\,16 \rangle$, &
			$\langle 7\,9\,10\,13\,17 \rangle$, &
			$\langle 7\,9\,11\,14\,15 \rangle$, &
			$\langle 7\,9\,12\,14\,15 \rangle$, &
			$\langle 7\,9\,12\,15\,17 \rangle$, &
			$\langle 7\,11\,14\,15\,16 \rangle$,\\
			
			&$\langle 8\,9\,10\,11\,16 \rangle$, &
			$\langle 8\,9\,10\,12\,16 \rangle$, &
			$\langle 8\,10\,11\,12\,17 \rangle$, &
			$\langle 8\,10\,12\,13\,14 \rangle$, &
			$\langle 8\,10\,12\,16\,17 \rangle$, &
			$\langle 8\,11\,13\,14\,15 \rangle$,\\
			
			&$\langle 8\,13\,14\,15\,16 \rangle$, &
			$\langle 9\,11\,12\,14\,15 \rangle$, &
			$\langle 9\,11\,12\,15\,17 \rangle$, &
			$\langle 10\,11\,12\,15\,17 \rangle$, &
			$\langle 10\,12\,15\,16\,17 \rangle$, &
			$\langle 11\,13\,14\,15\,16 \rangle$.
		\end{tabular}	
		\normalsize
		\caption{$17$-vertex triangulation of the $K3$ surface $(K3)_{17}$ with the standard PL structure. Its $f$-vector is $f ( K3 ) = (17,135, 610, 780, 312)$. Note, that the complex is not $2$-neighborly. \label{tab:facetList}}
	\end{center}
\end{table}

Further data as well as all $16$ steps of the dilatation process are available from the webpage of the first author, given at the end of this article. The source code itself is available upon request.

\medskip
So far we were not able to prove PL-equivalence to $(K3)_{16}$. However, since the given complex only has $17$ vertices this is most likely to be true, and further experiments will probably prove the following conjecture.

\begin{conj}
	\label{conj:main}
	$(K3)_{17}$ is PL homeo\-morphic to $(K3)_{16}$.
\end{conj}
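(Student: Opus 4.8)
The plan is to prove the theorem constructively, exhibiting an explicit iteration that resolves the sixteen singularities one at a time and then reduces the outcome by bistellar flips. I would set $\widetilde{K}_0 := (K^4)_{16}$ and maintain the invariant that $\widetilde{K}_i$ is a closed combinatorial $4$-pseudo manifold with exactly $16-i$ isolated singularities, each having a vertex link PL homeomorphic to $\mathbb{R}P^3$. The output of the iteration is $\widetilde{K}_{16}$, a closed combinatorial $4$-manifold, which a final round of flips carries to the $17$-vertex complex of Table~\ref{tab:facetList}.

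The core of each step, for a chosen singular vertex $v \in \widetilde{K}_i$, has three phases. First, the boundary $\partial\mathscr{C}$ and the link $\operatorname{lk}(v)$ are both PL $\mathbb{R}P^3$ and hence bistellarly equivalent as $3$-complexes (indeed the previous theorem records this for the vertex links of $(K^4)_{16}$); the task is to realize a corresponding sequence of moves inside $\widetilde{K}_i$ that transforms $\operatorname{lk}(v) = \partial(\widetilde{K}_i \setminus \operatorname{star}(v))$ into a copy isomorphic to $\partial\mathscr{C}$, which I would carry out with the BISTELLAR program of \cite{Lutz2008a}. Second, I apply the simplicial blowup: cut out $\operatorname{star}(v)^\circ$ and glue in $\mathscr{C}$ along an isomorphism $\psi:\operatorname{lk}(v)\to\partial\mathscr{C}$, producing $\widetilde{K}_{i+1}$ with one fewer singularity. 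Third, I reduce the considerably larger complex by bistellar flips before proceeding.

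The orientation bookkeeping is what keeps the construction on track. Each blowup can be performed in two combinatorially distinct ways according to the orientation of $\mathscr{C}$ (attaching a $+\mathbb{C}P^2$ or a $-\mathbb{C}P^2$ summand), and only a coherent choice accumulates the signatures additively. After each blowup I would compute the intersection form $Q_{i+1}$ via polymake and verify $|\operatorname{sign}(Q_{i+1})| = |\operatorname{sign}(Q_i)+1|$; since $\operatorname{sign}(Q_0)=0$, sixteen coherent steps yield $|\operatorname{sign}(Q_{16})| = 16$. In parallel one tracks $b_2$ rising by one at each step ($6 \to 22$) and $\chi$ rising by one ($8 \to 24$), while the torsion $(\mathbb{Z}_2)^5$ in $H_2(K^4)$ is progressively killed; the surviving complex is simply connected with even intersection form of rank $22$ and signature $16$. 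Because every operation is PL and the blowup literally realizes Spanier's gluing of copies of $\widetilde{C} \cong (\mathbb{C}P^2\setminus B^4)/_\sigma$, the result is PL homeomorphic — not merely homeomorphic — to the standard $K3$ surface, so no appeal to Freedman's theorem is needed for the PL identification.

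The hard part is the first phase of each step: general PL theory guarantees only that a connecting flip sequence exists, not that it is short or findable within reasonable resources, and the moves must be confined so as to reshape $\operatorname{lk}(v)$ into $\partial\mathscr{C}$ without disturbing the other, still singular, links. There is no a priori bound ensuring success at all sixteen stages, so its feasibility here is an experimental fact established by the heuristic flip search. A secondary obstacle is that the intermediate complexes swell dramatically after gluing in $\mathscr{C}$, so both the reduction phase and the final reduction to $17$ vertices rely on non-deterministic flip heuristics rather than a guaranteed algorithm. Once the $17$-vertex complex is in hand, its manifold property (all vertex links PL $3$-spheres) and its homology are verified directly by computer to confirm that it is indeed a triangulation of the $K3$ surface.
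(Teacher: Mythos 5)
There is a genuine gap, and it is not a technical one: you have proved the wrong statement. Your argument is, in essence, the paper's own proof of its main theorem (the construction of a $17$-vertex triangulation $(K3)_{17}$ from $(K^4)_{16}$ by $16$ simplicial blowups, with the orientation/signature bookkeeping guaranteeing the standard PL structure of the classical $K3$ surface). But the statement at hand is the paper's Conjecture~\ref{conj:main}: that $(K3)_{17}$ is PL homeomorphic to $(K3)_{16}$, the Casella--K\"uhnel $16$-vertex triangulation. Nothing in your proposal refers to $(K3)_{16}$ at all, and the whole difficulty of the conjecture lies precisely there: $(K3)_{16}$ is only known to be \emph{homeomorphic} to the $K3$ surface (via Freedman's theorem and its intersection form); its PL type has never been identified. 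Showing that $(K3)_{17}$ is PL standard — which is what your construction accomplishes — says nothing about whether $(K3)_{16}$ is PL standard as well.

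To actually prove the conjecture one would need to connect the two concrete triangulations, e.g.\ by exhibiting (via Pachner's theorem) a sequence of bistellar flips carrying $(K3)_{17}$ to a complex isomorphic to $(K3)_{16}$, or by some independent identification of the PL type of $(K3)_{16}$. The paper explicitly states that this was attempted and not achieved (``So far we were not able to prove PL-equivalence to $(K3)_{16}$''), and it leaves the statement as an open conjecture, noting in the subsequent remark that its failure would mean $(K3)_{16}$ is an exotic PL $K3$ — which would itself be a striking result. So no blind proof along the lines you describe can close this statement: the flip-search connecting the two complexes is the missing (and experimentally hard, possibly non-terminating) step, not the blowup construction, which is already established as the paper's Theorem~4.3.
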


\begin{remark}
	If Conjecture \ref{conj:main} is false this would imply that $(K3)_{16}$ is exotic. In this case, to our knowledge, $(K3)_{16}$ would be the first explicit triangulation with few vertices of a non-standard combinatorial $4$-manifold.
\end{remark}

%%%%%%%%%%%%%%%%%%%%%%%%%%%%%%%%%%%%%%%%%%%%%%%%%%%%%%%%%%%%%%%%%%%%%%%%%%%%%%%
%%%%%%%%%%%%%%%%%%%%%%%%%%%%% SLICINGS OF K3 %%%%%%%%%%%%%%%%%%%%%%%%%%%%%%%%%%
%%%%%%%%%%%%%%%%%%%%%%%%%%%%%%%%%%%%%%%%%%%%%%%%%%%%%%%%%%%%%%%%%%%%%%%%%%%%%%%
\section{Critical Point Theory and Slicings}
\label{schnitteDurchK3}

The Morse theory for smooth functions defined on smooth manifolds is an important tool in topology. Similarly, the PL-structure of a $d$-dimensional combinatorial (pseudo-)manifold $M$ can be examined using an analogous concept of critical points of functions, defined on a triangulation of a manifold $M$, compare \cite{Kuehnel90TrigMnfFewVert}, \cite{Kuehnel95TightPolySubm}. In this section we describe a few computer experiments on triangulations of the $K3$ surface and the Kummer variety. As a result we obtain a picture of slices through these spaces by levels of perfect Morse functions in a PL version.

\begin{deff}
 \label{def:rsl}
 Let $M^d$ be a combinatorial manifold. A function $f:M \to \mathbb{R}$ is called \textit{regular simplexwise linear (rsl)} if $f(v) \neq f(w)$ for any two vertices $w \neq v$ and if $f$ is linear when restricted to an arbitrary simplex of the triangulation.

 A point $x \in M$ is said to be {\it critical} for an rsl-function $f:M \to \mathbb{R}$ if
 \[H_{\star} (M_x , M_x \backslash \{ x \} , F) \neq 0 \]
 where $M_x := \{ y \in M | f(y) \leq f(x) \}$ and $F$ is a field.

 	It follows that no point of $M$ can be critical except possibly for the vertices. If we fix an rsl-function $f$ and a vertex $v$ of $M$ we can define the {\it multiplicity vector} $\mathbf{m}(v,F)$ as the following $(d+1)$-tuple of integers:
	\begin{equation*}
		\mathbf{m}(v,F) := \big( \dim H_0 ( M_v , M_v \backslash \{ v \} , F) , \ldots , \dim H_d ( M_v , M_v \backslash \{ v \} , F) \big).
	\end{equation*}
	$v$ is called {\it critical of index $i$ and multiplicity $m$} if $\dim H_i ( M_v , M_v \backslash \{ v \} , F)= m > 0 $.
	\[ \sum \limits_{i=0}^{d} \dim H_i ( M_v , M_v \backslash \{ v \} , F)\]
	is called {\it total multiplicity } of $v$,
	\[ \mu_i (f,F) := \sum \limits_{v \in V} \dim H_i ( M_v , M_v \backslash \{ v \} , F) \]
	is referred to as the {\it number of critical points of index $i$} in $M$ (where $V$ denotes the set of vertices of $M$), and 	\[ \mu (f,F) := \sum \limits_{i = 0}^{d} \mu_i (f,F) \] is said to be {\it the number of critical points of $M$}. The multiplicity vector together with the number of critical points has to be considered for appropriately encoding the relevant information about the PL-structure of $M$ since, in contrast with the smooth case, higher multiplicities cannot be avoided in general.
\end{deff}

The classical Morse relation $\mu_i (f,F) \geq \operatorname{b}_i (M,F) $ is still true for any rsl-function $f$ on $M$ where $\operatorname{b}_i(M,F) = {\rm dim}H_i(M;F)$ denotes its $i$-th Betti number. Equality refers to the case of a tight or perfect function. If any rsl-function has this property we call the triangulation of $M$ a {\it tight triangulation} (cf. \cite{Kuehnel95TightPolySubm}). In particular $\mu_i$ does not depend on $f$ in the tight case. For some examples of multiplicities on $(K3)_{16}$ see Table \ref{tab:k3} below.

\begin{deff}
	\label{def:slicing}
	Let $M$ be an orientable combinatorial $d$-(pseudo-)manifold and let $f:M \to \mathbb{R}$ be an rsl-function. Then we call the pre-image $f^{-1} (x)$ a {\it slicing} of $M$ whenever $x \neq f(v)$ for any vertex $v \in M$.
\end{deff}

By construction a slicing is a PL $(d-1)$-manifold and we have $f^{-1} (x) \cong f^{-1} (y)$ whenever $f^{-1}[f(x),f(y)]$ contains no vertex, i.e., if no vertex is mapped into the interval $[f(x),f(y)]$. Note that any partition of the set of vertices $V = V_1 \dot{\cup} V_2 $ of $M$ already determines a slicing: Just define an rsl-function $g$ with $g(v) < g(w)$ for all $v \in V_1$ and $w \in V_2$ and look at a suitable pre-image. For an example of a slicing of a 3-manifold and a 3-pseudo manifold see \cite[Fig.9,Fig.11]{Kuehnel95TightPolySubm}.

\medskip
Since every combinatorial (pseudo-)manifold has a finite number of vertices there exist only a finite number of slicings. Hence, if $f$ is chosen carefully, the induced slicings admit a useful visualization of $M$.

\subsection*{Combinatorial Morse analysis on $(K3)_{16}$}
\label{ssec:morseAnaK3}

The 16-vertex triangulation of the $K3$ surface is a very special object in combinatorial topology:

\begin{enumerate}
	\item It satisfies equality in the {\it generalized Heawood inequality} (\ref{eq:Heawood}) for the number $n$ of vertices of 	a 4-manifold $M$ with Euler characteristic $\chi(M)$.
	\item It is {\it $3$-neighborly} or {\it super-neighborly} 	meaning that $f_2 = {n \choose 3}$. See \cite[Thm.5.8]{Swartz} for a characterization of all possible $g$-vectors of a triangulated $K3$ surface, starting with the minimum 
	$(g_0,g_1,g_2) = (1,10,55)$.
	\item It is the only known triangulation of a 4-manifold 	admitting an automorphism group acting {\it $2$-transitively} on the set of vertices	(besides the trivial case of the 6-vertex 4-sphere).
\end{enumerate}

\noindent
From the view-point of Morse theory this has the following consequence:

\begin{prop} 
	Any rsl-function $f$ defined on $(K3)_{16}$ is a perfect function in the sense that the total number of critical points is $24$. More precisely we have $\mu_0(f) = \mu_4(f) = 1$, $\mu_1(f) = \mu_3(f) = 0$ and $\mu_2(f) = 22$. This holds for any choice of a field $F$.
\end{prop}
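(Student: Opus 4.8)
The plan is to reduce the statement to the homology of the vertex links through PL critical point theory and then exploit the $3$-neighborliness of $(K3)_{16}$. Order the vertices so that $f(v_1) < f(v_2) < \cdots < f(v_{16})$, and for a vertex $v$ let $\operatorname{lk}^{-}(v) \subseteq \operatorname{lk}(v)$ be the \emph{lower link}, i.e.\ the subcomplex induced on those $w \in \operatorname{lk}(v)$ with $f(w) < f(v)$; let $\operatorname{lk}^{+}(v)$ be the \emph{upper link}. The basic excision/cone computation of PL Morse theory (cf.\ \cite{Kuehnel95TightPolySubm}) gives, for each vertex,
\[ \dim H_i(M_v, M_v \setminus \{v\}; F) = \dim \tilde{H}_{i-1}(\operatorname{lk}^{-}(v); F), \]
so that $\mu_i(f,F) = \sum_{v} \dim \tilde{H}_{i-1}(\operatorname{lk}^{-}(v); F)$. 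This turns the whole claim into a statement about the reduced homology of lower and upper links.

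Next I would use neighborliness. Since $(K3)_{16}$ is $3$-neighborly, every triple $\{v, w_1, w_2\}$ is a triangle, hence every pair $\{w_1, w_2\}$ of vertices of $\operatorname{lk}(v)$ spans an edge: each vertex link is a $2$-neighborly triangulated $3$-sphere on the remaining $15$ vertices. Consequently every induced subcomplex of a link, in particular every $\operatorname{lk}^{-}(v)$ and every $\operatorname{lk}^{+}(v)$, is either empty or connected. This settles the bottom two indices at once. We have $\tilde{H}_{-1}(\operatorname{lk}^{-}(v); F) \neq 0$ precisely when $\operatorname{lk}^{-}(v) = \emptyset$, which occurs only for the global minimum $v_1$, so $\mu_0 = 1$; and $\tilde{H}_0(\operatorname{lk}^{-}(v); F) = 0$ for every $v$ (the lower link being empty or connected), so $\mu_1 = 0$.

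For the top two indices I would invoke combinatorial Alexander duality inside the $3$-sphere $\operatorname{lk}(v)$ for the complementary induced subcomplexes $\operatorname{lk}^{-}(v)$ and $\operatorname{lk}^{+}(v)$, namely
\[ \tilde{H}_i(\operatorname{lk}^{-}(v); F) \cong \tilde{H}_{2-i}(\operatorname{lk}^{+}(v); F) \qquad \text{over any field } F. \]
Taking $i = 3$ shows $\tilde{H}_3(\operatorname{lk}^{-}(v); F) \cong \tilde{H}_{-1}(\operatorname{lk}^{+}(v); F)$, which is nonzero only for the global maximum $v_{16}$ (empty upper link), giving $\mu_4 = 1$; taking $i = 2$ and reusing the connectivity argument for the upper link gives $\tilde{H}_2(\operatorname{lk}^{-}(v); F) \cong \tilde{H}_0(\operatorname{lk}^{+}(v); F) = 0$, so $\mu_3 = 0$. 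The remaining value is forced by the Morse--Euler relation $\sum_i (-1)^i \mu_i = \chi(K3) = 24$, which yields $\mu_2 = 24 - \mu_0 - \mu_4 = 22$; together with the Morse inequalities $\mu_i \geq b_i$ this exhibits $f$ as perfect. Field independence is automatic, since only $H_0$-level connectivity (insensitive to $F$), Alexander duality over $F$, and the Euler characteristic enter.

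The main obstacle is the control of $\mu_3$ (equivalently $\mu_4$): connectivity of the lower links is elementary, but to see that lower links carry no $2$-dimensional homology one genuinely needs the duality exchanging lower and upper links inside the sphere $\operatorname{lk}(v)$. Some care is also required with the reduced-homology conventions for the empty complex, so that the global minimum and maximum are counted correctly, and with confirming that the Alexander isomorphism holds over an arbitrary field, which is exactly what secures the final clause of the statement.
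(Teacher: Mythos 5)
Your proof is correct, and its skeleton coincides with the paper's: neighborliness controls the low indices, a duality controls the high ones, and the relation $\mu_0-\mu_1+\mu_2-\mu_3+\mu_4=\chi(K3)=24$ then forces $\mu_2=22$. The execution differs in one genuine way. For $\mu_3$ and $\mu_4$ the paper simply cites the duality $\mu_i(f)=\mu_{4-i}(-f)$ for rsl-functions on combinatorial manifolds and applies its low-index argument to $-f$, leaning on the established tightness theory of \cite{Kuehnel95TightPolySubm}; you instead prove the needed statement link-locally, observing that $3$-neighborliness makes every vertex link a $2$-neighborly triangulated $3$-sphere on the remaining $15$ vertices and then invoking combinatorial Alexander duality to exchange lower and upper links. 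Your route is in effect a self-contained proof of exactly the duality the paper treats as a black box, and it makes explicit where the combinatorial-manifold hypothesis enters (the links must be spheres); indeed the paper remarks in the following section that this duality fails on $(K^4)_{16}$ precisely because its vertex links are not spheres, which your argument explains. Similarly, for $\mu_0$ and $\mu_1$ the paper argues globally -- $2$-neighborliness makes all vertex spans connected and $3$-neighborliness makes them simply connected -- whereas you argue locally via connectivity of lower links; the two are equivalent and hinge on the same neighborliness properties. What the paper's formulation buys is brevity; what yours buys is independence from the cited theory, at the cost of the bookkeeping you rightly flag (reduced homology of the empty complex, and Alexander duality over an arbitrary field), all of which you handle correctly.
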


\begin{proof}
	This follows from the fact that the 16-vertex triangulation of the $K3$ surface is a tight triangulation in the sense of \cite{Kuehnel95TightPolySubm}, \cite{KuhnelLutz}. The reason is that the triangulation is 3-neighborly which implies that there are no critical points of index 1: Any subset of vertices spans a connected and simply connected subset. The 2-neighborliness implies that every rsl-function has exactly one critical point of index 0. The rest follows from the duality $\mu_i(f) = \mu_{4-i}(-f)$ and the Poincar\'{e} relation $\mu_0 - \mu_1 + \mu_2 - \mu_3 + \mu_4 = \chi(K3) = 24$.
\end{proof}

\begin{kor} 
	Any rsl-function $f$ defined on $(K3)_{16}$ has a critical point of index $2$ with a multiplicity higher than $2$. More precisely $10$ possible critical vertices have to build up the second Betti number $22$. This holds for any choice of a field $F$.	
	
	Moreover, any slicing of an rsl-function on $(K3)_{16}$ is a connected $3$-manifold.
\end{kor}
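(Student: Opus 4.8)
The plan is to read off everything from the preceding Proposition, which already supplies, for \emph{every} rsl-function and \emph{every} field, the Morse numbers $\mu_0=\mu_4=1$, $\mu_1=\mu_3=0$, $\mu_2=22$. First I would fix an rsl-function $f$ and order the $16$ vertices as $v_1<v_2<\cdots<v_{16}$ by their $f$-values. The local contribution of a vertex $v$ is computed from its \emph{lower link} $\operatorname{lk}^-(v)$, the full subcomplex of $\operatorname{lk}(v)$ spanned by the vertices below $v$: by excision the pair $(M_v,M_v\setminus\{v\})$ is identified with $(v*\operatorname{lk}^-(v),(v*\operatorname{lk}^-(v))\setminus\{v\})$, and since the cone $v*\operatorname{lk}^-(v)$ is contractible the long exact sequence of the pair gives
\[
	\dim H_i(M_v,M_v\setminus\{v\};F)=\dim\widetilde H_{i-1}(\operatorname{lk}^-(v);F).
\]
Thus the entire multiplicity vector of $v$ is encoded in the reduced homology of its lower link.

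Next I would use $3$-neighborliness to locate the index-$2$ points. The lower link of $v_1$ is empty, of $v_2$ is the single point $\{v_1\}$, and of $v_3$ is the edge $\{v_1,v_2\}$ (which lies in $\operatorname{lk}(v_3)$ because $\{v_1,v_2,v_3\}$ is a triangle of the $3$-neighborly complex). The latter two lower links are contractible, so $v_2$ and $v_3$ are not critical at all, while $v_1$ only realizes the unique index-$0$ point. Running the same computation for $-f$ (i.e.\ on the upper links) shows that $v_{14},v_{15},v_{16}$ carry no index-$2$ multiplicity either. Hence every index-$2$ critical point lies among the ten vertices $v_4,\dots,v_{13}$, and these ten must carry the whole index-$2$ count $\mu_2=22$. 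If each of them had multiplicity at most $2$ the total would be at most $20<22$, so the pigeonhole principle forces one of them to have index-$2$ multiplicity $\ge\lceil 22/10\rceil=3>2$. Field-independence is automatic: the Proposition holds over every $F$, the contractibility arguments are field-free, and $\dim H_2(K3;F)=22$ for all $F$ because $H_2(K3;\mathbb{Z})$ is torsion-free.

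For the second assertion I would apply Mayer--Vietoris to the decomposition $M=M_{\le x}\cup_{f^{-1}(x)}M_{\ge x}$, valid (after a small thickening so the overlap deformation retracts to $f^{-1}(x)$) for any regular level $x$ strictly between the minimum and maximum vertex values, i.e.\ for every nonempty slicing. The sublevel set $M_{\le x}$ is connected, since components of sublevel sets are created only at index-$0$ critical points and the sole such point is the global minimum $v_1$, while crossing any later vertex attaches a handle of index $\ge 1$ or nothing, neither of which raises the component count; dually $\mu_4(f)=\mu_0(-f)=1$ makes $M_{\ge x}$ connected. In the reduced Mayer--Vietoris sequence
\[
	H_1(M;F)\longrightarrow\widetilde H_0(f^{-1}(x);F)\longrightarrow\widetilde H_0(M_{\le x};F)\oplus\widetilde H_0(M_{\ge x};F)
\]
the right-hand term vanishes because both pieces are connected, and the left-hand term vanishes because the $K3$ surface is simply connected, so $H_1(K3)=0$. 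Exactness then yields $\widetilde H_0(f^{-1}(x);F)=0$; as the slicing is already known to be a PL $3$-manifold, it is a connected $3$-manifold.

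The excision/cone identification and the component bound for $M_{\le x}$ are standard in this combinatorial Morse theory, and the second half is pure Mayer--Vietoris once simple connectivity of $K3$ is invoked. The one step that genuinely drives the first half — and where I would be most careful — is the bookkeeping isolating precisely the bottom three and top three vertices as non-contributors to index $2$: it rests entirely on $3$-neighborliness forcing the lowest lower links to be a point and then a closed edge, and it must be verified to hold \emph{uniformly} for every vertex ordering arising from every rsl-function. Everything else reduces to pigeonhole.
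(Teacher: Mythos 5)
Your proof is correct, and its first half is essentially the paper's own argument: the paper likewise deduces the multiplicity statement from the Morse inequality $\mu_2(f)\geq b_2 = 22$ together with the observation that $3$-neighborliness confines index-$2$ critical points to the ten middle vertices, and then applies pigeonhole ($22 > 2\cdot 10$); you simply make explicit the excision/lower-link computation (empty set, point, closed edge) that the paper treats as obvious, plus the per-vertex duality $m_i(v,f)=m_{4-i}(v,-f)$ needed to dispose of the top three vertices. Note that this per-vertex duality is a slightly stronger statement than the aggregate duality $\mu_i(f)=\mu_{4-i}(-f)$ quoted in the paper's Proposition -- it rests on Alexander duality inside the link sphere of each vertex -- but it is standard for combinatorial manifolds over a field, and since index $2$ is self-dual your use of it is sound. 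Where you genuinely diverge is the connectedness of slicings. The paper argues dynamically: since $\mu_1(f)=0$, a disconnected level could never be rejoined into the connected level that must occur near the maximum, so no disconnected level exists. You argue statically via Mayer--Vietoris on $M=M_{\leq x}\cup M_{\geq x}$: connectivity of both pieces (from $\mu_0(f)=\mu_0(-f)=1$) together with $H_1(K3;F)=0$ forces $\widetilde{H}_0\bigl(f^{-1}(x);F\bigr)=0$. Your version is more rigorous -- the paper's ``would have to be modified into a connected level later'' tacitly requires a case analysis of how level components are born, merge, split and die as one passes vertices -- and it is also more general, needing only $2$-neighborliness and simple connectivity of the ambient manifold rather than the $3$-neighborliness behind $\mu_1=0$. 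The trade-off is that the paper's argument stays entirely inside the combinatorial Morse framework and identifies index $1$ as the precise obstruction, while yours imports the homological input $H_1(M)=0$.
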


\begin{proof} 
	The first part is obvious from the Morse inequality $\mu_2(f) \geq b_2(M) = 22$ and the fact that by the 3-neighborliness only the middle vertices (i.e., all but the three on top and the three on bottom) can be critical of index 2. For examples of multiplicity vectors see Table \ref{tab:k3} below. The second part follows from the fact that there is no critical point of index 1. If there were a disconnected level it would have to be modified into a connected level later, and this procedure requires a critical point of index 1 in between.
\end{proof}
 
\medskip
It may be interesting to see how the levels of such a function change when passing through a critical level. It does not seem to be known from differential topology what the possible levels can be for smooth perfect functions on the $K3$ surface. The standard embedding $(z_0,z_1,z_2,z_3) \mapsto (z_i\bar{z_j})_{ij}$ of a quartic surface in projective 3-space $K3 \rightarrow \mathbb{C}P^3 \rightarrow S^{14} \rightarrow \mathbb{R}^{15}$ induces smooth Morse functions by linear projections from 15-space to $\mathbb{R}$. However, in general these won't be perfect. It is well known that there is no tight smooth embedding or immersion of the $K3$ surface into any Euclidean space \cite{Thorbergsson}. Not too much seems to be known about possible slicings of perfect smooth Morse functions, defined on the $K3$ surface. In the PL case we have the following feature:

\medskip 
An rsl-function on $(K3)_{16}$ 
\begin{equation*} 
	f_{\Omega}: (K3)_{16} \to \left[ 0,1 \right] 
\end{equation*}
is essentially determined by a fixed ordering on the set of vertices $\Omega := \{ v_1 , \ldots , v_{16} \}$ determining the function $f_{\Omega}$ by the condition $0 = f_{\Omega}(v_1) < \ldots < f_{\Omega}(v_{16}) = 1$. Any slicing $f_{\Omega}^{-1}(\alpha)$ of a 4-manifold consists only of tetrahedra and $3$-dimensional prisms of type $\Delta^2 \times \left[ 0, 1 \right] $ (where $ \Delta^2 $ denotes a triangle), induced by proper sections with the $4$-simplices of $(K3)_{16}$. In many cases the topological type of $ f_{\Omega}^{-1}(\alpha) $ can be identified using standard techniques. Some of the slicings can be seen in advance:

\begin{itemize}
	\item \textbf{The $3$-torus:} Obviously there is a 3-torus as a slicing of the 4-torus. It can be arranged that this avoids all the 16 fixed points of the involution. Hence we have the same slicing in the Kummer variety and, by the purely local resolution procedure, also in the $K3$ surface.
	\item \textbf{The real projective $3$-space:} The link of any singular point in $K^4$ is a real projective $3$-space. By resolving the singularities we only change a neighborhood of these points. Thus, there are slicings in $(K3)_{16}$ separating such a neighborhood. These are homeomorphic with $\mathbb{R}P^3$.
	\item \textbf{The Poincar\'{e} homology sphere $\Sigma^3$:} There is a surgery description of the $K3$ surface showing the Poincar\'{e} homology sphere as a possible slicing (see \cite{Saveliev1999} for details). Even though this does not tell about the number of vertices which will be needed it turned out that a certain slicing of the 16-vertex triangulation is this manifold $\Sigma^3$, see below.
\end{itemize}

\begin{prop}
	As slicings of $(K3)_{16}$ we obtain at least the manifolds 	$$S^3,\mathbb{R}P^3,L(3,1),L(4,1),L(5,1),\Sigma^3$$ 	and a number of other space forms: The $3$-torus, the 	cube space, the octahedron space, 	the truncated cube space and the prism space $P(3)$. 	Here $\Sigma^3$ denotes the Poincar\'{e} homology sphere with 	a fundamental group of order $120$.
\end{prop}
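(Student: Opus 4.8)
The plan is to prove this by explicit construction. Since, as observed just above the statement, every partition of the vertex set $V = V_1 \,\dot{\cup}\, V_2$ of $(K3)_{16}$ determines a slicing, it suffices to exhibit for each manifold in the list one ordering $\Omega$ of the $16$ vertices (equivalently, one such partition together with a threshold $\alpha$) whose slice $f_\Omega^{-1}(\alpha)$ is PL homeomorphic to the target. First I would record the purely local rule for reading off a slice: a $4$-simplex with $k$ of its vertices below $\alpha$ and $5-k$ above contributes nothing if $k\in\{0,5\}$, a tetrahedron if $k\in\{1,4\}$, and a triangular prism $\Delta^2\times[0,1]$ if $k\in\{2,3\}$; gluing these pieces along the cross-sections of the shared tetrahedral $3$-faces assembles $f_\Omega^{-1}(\alpha)$. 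Because $(K3)_{16}$ is orientable and a slice separates it into its sublevel and superlevel sets, every slicing is a closed orientable $3$-manifold, which already restricts the candidates.

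For the three structurally transparent cases I would combine the heuristics indicated in the preceding bullet list with an explicit verification. The $3$-torus arises from a slice of $\mathbb{T}^4$ avoiding all $16$ fixed points of the involution; since the resolution is purely local, the same vertex partition yields a $T^3$-slice, which I would confirm by computing $H_1 = \mathbb{Z}^3$. For $\mathbb{R}P^3$ the geometric picture is that a slice can separate off a neighborhood of a resolved node, whose boundary is the $\mathbb{R}P^3$ appearing as $\partial\mathscr{C}$; concretely I would exhibit an ordering whose slice has $\pi_1 \cong \mathbb{Z}/2$ and identify it as $\mathbb{R}P^3$. The Poincar\'e homology sphere $\Sigma^3$ is predicted by the surgery description of the $K3$ surface \cite{Saveliev1999}; here I would exhibit the producing ordering and check that the slice is a homology $3$-sphere whose fundamental group is the binary icosahedral group of order $120$.

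For the remaining lens spaces and space forms the verification is computational and uniform. For a candidate ordering I would subdivide the prism pieces coherently (compatibly along shared faces) into an honest simplicial complex, check that every vertex link is a $2$-sphere so that the slice is a closed $3$-manifold, and then compute $H_1$ and a presentation of $\pi_1$ using the implementations of \cite{Lutz2008a} and \cite{Joswig00Polymake}. Since a closed orientable $3$-manifold with finite fundamental group is a spherical space form (by elliptization) and the spherical space forms are completely classified, I would identify the lens spaces and the cube, octahedron, truncated-cube and prism space $P(3)$ by matching the computed $\pi_1$ against this classification, refining with finer invariants wherever $\pi_1$ alone does not determine the type; the flat $T^3$ is pinned down among flat manifolds by $H_1 = \mathbb{Z}^3$.

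The main obstacle is precisely this last identification step, because the fundamental group does not always determine the homeomorphism type. It fails already for $L(5,1)$ versus $L(5,2)$, which share $\pi_1 \cong \mathbb{Z}/5$: here I would compute the linking form (equivalently the Reidemeister torsion) of the slice to distinguish the two, or, failing a clean invariant, reduce the slice by bistellar flips to a reference triangulation of the intended lens space. A secondary technical point is ensuring that the prism subdivisions are globally consistent, so that the assembled complex is genuinely a triangulated $3$-manifold and not merely a cell complex; this check is routine but must be carried out before the homology and $\pi_1$ computations can be trusted.
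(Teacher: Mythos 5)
Your overall strategy --- realize each manifold as $f_\Omega^{-1}(\alpha)$ for an explicitly chosen vertex ordering and then identify the resulting closed orientable $3$-manifold computationally --- is the same as the paper's, and your local prism/tetrahedron description of a slicing matches Section \ref{schnitteDurchK3}. The difference lies in the identification step. The paper anchors everything at the symmetric partition $\{1,\ldots,8\}\,\dot{\cup}\,\{9,\ldots,16\}$ induced by the order-two automorphism $(1,16)(2,15)\cdots(8,9)$ of $(K3)_{16}$, whose middle level is the $3$-torus, and then perturbs this partition; combinatorial structure does most of the work: by $3$-neighborliness every slicing with at most three vertices on one side, or with four vertices spanning a $4$-face, is $S^3$; four vertices spanning an empty tetrahedron give $\mathbb{R}P^3$, the sublevel set being a copy of $\mathscr{C}$; and since neither $\{1,\ldots,8\}$ nor $\{9,\ldots,16\}$ spans a $4$-simplex, five-vertex slicings are forced to be non-trivial. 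The remaining types (cube and octahedron space, $\Sigma^3$, truncated cube space, $P(3)$, $L(5,1)$, flat manifolds) are identified by bistellar-flip comparison against the reference triangulations of \cite{Lutz1999} --- which you propose only as a fallback. Your invariant-based identification ($\pi_1$ presentations, elliptization, linking form or Reidemeister torsion to separate $L(5,1)$ from $L(5,2)$) is a legitimate alternative, at the cost of invoking geometrization; your explicit treatment of the lens-space ambiguity is a point the paper's proof does not even address.

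There are, however, two concrete gaps. First, your ``transfer'' arguments for $T^3$ and $\mathbb{R}P^3$ are not valid for $(K3)_{16}$: the complex obtained by resolving the singularities of $(K^4)_{16}$ is the $17$-vertex triangulation $(K3)_{17}$, and its PL equivalence to the Casella--K\"uhnel triangulation $(K3)_{16}$ is precisely Conjecture \ref{conj:main}, which is open. Hence ``the same vertex partition'' of $\mathbb{T}^4$ or of the Kummer variety has no meaning on the vertex set of $(K3)_{16}$; the bullet list preceding the proposition supplies heuristics only, and every slicing must be produced and identified directly on $(K3)_{16}$, which the paper does via the automorphism above. Second, confirming the $3$-torus by computing $H_1 \cong \mathbb{Z}^3$ is insufficient: $H_1$ cannot detect, for instance, connected summands that are homology spheres, and ``pinned down among flat manifolds'' presupposes that you already know the slicing is flat. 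You would need the full computation $\pi_1 \cong \mathbb{Z}^3$ (followed by an appeal to geometrization), or a bistellar-flip reduction to a known $T^3$ triangulation, which is how the paper proceeds. With these repairs your argument goes through.
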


\begin{proof}
	The permutation $$ ( 1,16)( 2,15)( 3,14)( 4,13)( 5,12)( 6,11)( 7,10)( 8, 9) $$ on the 16 vertices is an automorphism of $(K3)_{16}$, and we have $ f_{\{ 1, \ldots, 16 \} }^{-1} (\frac{1}{2}) \cong \mathbb{T}^3 $. Hence, we use this slicing as a starting point and analyze all possible slicings of $(K3)_{16}$ around this 3-torus in the middle.
	
	\medskip
	Since $(K3)_{16}$ is $3$-neighborly all slicings with $3$ or less vertices on one side are trivial (i. e. the slicing is a $3$-sphere). With four vertices on one side we have two possible situations. Either the tetrahedron formed by the four vertices is contained in the complex (in this case the slicing is clearly trivial) or it is not. In the latter case we have a slicing behind an empty tetrahedron. This type of slicing is a real projective $3$-space. Therefore a simplicial decomposition of the set $ |f_{\Omega}^{-1} (\left[ 0, \alpha \right] )| $, $ \frac{3}{15} < \alpha < \frac{4}{15} $ is PL-homeomorphic to the mapping cylinder $\mathscr{C}$ of the Hopf map $\tilde{h}$ in Section~\ref{sec:K4toK3}. Hence, we can find topological copies of $\mathscr{C}$ in $(K3)_{16}$ (which is not surprising).
	
	\medskip
	Neither the span of $ \{ 1 , \ldots , 8 \} $ nor the span of $ \{ 9 , \ldots , 16 \} $ contains a $4$-simplex of $ (K3)_{16} $. Thus, $5$ vertices on one side cannot induce a trivial slicing with a sphere but such with a lens space of type $\operatorname{L}(4,1)$, $\operatorname{L}(3,1)$ or $\operatorname{L}(2,1) = \mathbb{R}P^3$. In the case of $6$ or $7$ vertices we have the cube space, the octahedron space or the Poincar\'{e} homology sphere. $8$ vertices on each side result in the $3$-torus, the only non-spherical $3$-manifold in this series.
	
	\medskip
	For a complete list of the topological types of these slicings see Table \ref{tab:toptypesk3}.

	\begin{table}[H]
	{\small 
		\begin{center}
			\setlength{\extrarowheight}{0.1cm}
			\begin{tabular}{|l||cc||cl||cl|} 
				\hline
				&\multicolumn{2}{|l||}{$f_{\{ 1, \ldots ,5,7,6,8,9,11,10,12,\ldots ,16\}}$}&\multicolumn{2}{|l||}{$f_{\{ 2,\ldots ,7,1,8,9,16,10, \ldots ,15\}}$}&\multicolumn{2}{|l|}{$f_{\{ 1, \ldots ,5,7,6,8,9,11,10,12, \ldots ,16\}}$}\\ 
				\hline
				$f_i(v)$ & $v$ & $\mathbf{m}(v,\mathbb{F}_2)$ & $v$ & $\mathbf{m}(v,\mathbb{F}_2)$ & $v$ & $\mathbf{m}(v,\mathbb{F}_2)$\\
				\hline
				$0$&$1$&$(1,0,0,0,0)$&$2$&$(1,0,0,0,0)$&$1$&$(1,0,0,0,0)$ 	\\
				$\frac{1}{15}$&$2$&$(0,0,0,0,0)$&$3$&$(0,0,0,0,0)$&$2$&$(0,0,0,0,0)$ \\
				$\frac{2}{15}$&$3$&$(0,0,0,0,0)$&$4$&$(0,0,0,0,0)$&$3$&$(0,0,0,0,0)$ \\
				$\frac{3}{15}$&$4$&$(0,0,1,0,0)$&$5$&$(0,0,0,0,0)$&$4$&$(0,0,1,0,0)$ \\
				$\frac{4}{15}$&$5$&$(0,0,2,0,0)$&$6$&$(0,0,1,0,0)$&$5$&$(0,0,1,0,0)$ \\
				$\frac{5}{15}$&$7$&$(0,0,3,0,0)$&$7$&$(0,0,3,0,0)$&$7$&$(0,0,2,0,0)$ \\
				$\frac{6}{15}$&$6$&$(0,0,2,0,0)$&$1$&$(0,0,4,0,0)$&$6$&$(0,0,4,0,0)$ \\
				$\frac{7}{15}$&$8$&$(0,0,3,0,0)$&$8$&$(0,0,3,0,0)$&$8$&$(0,0,3,0,0)$ \\
				$\frac{8}{15}$&$9$&$(0,0,3,0,0)$&$9$&$(0,0,3,0,0)$&$9$&$(0,0,3,0,0)$ \\
				$\frac{9}{15}$&$11$&$(0,0,2,0,0)$&$16$&$(0,0,4,0,0)$&$11$&$(0,0,4,0,0)$\\
				$\frac{10}{15}$&$10$&$(0,0,3,0,0)$&$10$&$(0,0,3,0,0)$&$10$&$(0,0,2,0,0)$\\
				$\frac{11}{15}$&$12$&$(0,0,2,0,0)$&$11$&$(0,0,1,0,0)$&$12$&$(0,0,1,0,0)$\\
				$\frac{12}{15}$&$13$&$(0,0,1,0,0)$&$12$&$(0,0,0,0,0)$&$13$&$(0,0,1,0,0)$\\
				$\frac{13}{15}$&$14$&$(0,0,0,0,0)$&$13$&$(0,0,0,0,0)$&$14$&$(0,0,0,0,0)$\\
				$\frac{14}{15}$&$15$&$(0,0,0,0,0)$&$14$&$(0,0,0,0,0)$&$15$&$(0,0,0,0,0)$\\
				$1$&$16$&$(0,0,0,0,1)$&$15$&$(0,0,0,0,1)$&$16$&$(0,0,0,0,1)$\\ 
				\hline 
			\end{tabular}
			\\[0,2cm]
			\caption{Multiplicity vectors of the critical points of 
			$f_{\{ 1, \ldots ,5,7,6,8,9,11,10,12,\ldots ,16\}}$, $f_{\{ 2,\ldots ,7,1,8,9,16,10, \ldots ,15\}}$, $f_{\{ 1, \ldots ,5,7,6,8,9,11,10,12, \ldots ,16\}}: (K3)_{16} \to [0,1]$ \label{tab:k3}} 
		\end{center}}
	\end{table}
	
	\begin{table}[H]
		{\small \begin{center}
			\begin{tabular}{|r|c|l|}
				\hline
				$\alpha$&$f_\Omega^{-1} (\alpha)$&slicing in between\\
				\hline
				&&\\
				$\frac{1}{30}$&${S}^3$&$\{1\}$ and $\{2,\ldots ,16\}$\\
				&&\\
				$\frac{1}{10}$&${S}^3$&$\{1,2\}$ and $\{ 3,\ldots ,16\}$\\
				&&\\
				$\frac{1}{6}$&${S}^3$&$\{1,2,3\}$ and $\{4,\ldots ,16\}$\\
				&&\\
				$\frac{7}{30}$&${S}^3$&$\{2,3,4,5\}$ and $ \{1,6,\ldots ,16\}$\\
				&$\mathbb{R}P^3 $&$\{1,2,3,4\}$ and $\{5,\ldots ,16\}$\\
				&&\\
				$\frac{3}{10}$&$\operatorname{L}(4,1)$&$\{1,\ldots ,5\}$ and $\{ 6,\ldots ,16\}$\\
				&$\operatorname{L}(3,1)$&$\{2,\ldots ,6\}$ and $\{1,7,\ldots ,16\}$\\
				&$\mathbb{R}P^3$ &$\{1,2,3,5,6\}$ and $\{4,7,\ldots ,16\}$\\
				&&\\
				$\frac{11}{30}$&$\mathscr{C}^3$&$\{2,\ldots ,7\}$ and $\{1,8,\ldots ,16\}$\\
				&$\mathscr{O}^3$&$\{1,\ldots ,5,7\}$ and $\{ 6,8,\ldots ,16\} $\\
				&&\\
				$\frac{13}{30}$&$\Sigma^3$&$\{1,\ldots ,7\}$ and $\{8,\ldots ,16\}$\\
				&&\\
				$\frac{1}{2}$&$\mathbb{T}^3 $&$\{1,\ldots ,8\}$ and $\{9,\ldots
				,16\} $ \\[0.1cm] 
				\hline
			\end{tabular}
		\end{center}}
		\caption{Topological types of slicings of $(K3)_{16}$. Here $\Sigma^3$ denotes the Poincar\'{e} homology sphere, $\mathscr{C}^3$ the cube space and $ \mathscr{O}^3$ the octahedron space.\label{tab:toptypesk3}}
	\end{table}
	
	\bigskip
	Besides the symmetrical slicings of Table~\ref{tab:toptypesk3} we found a number of other 3-dimensional spherical space forms like the truncated cube space, the prism space $P(3)$ or the lens space $\operatorname{L}(5,1)$ as well as some orientable flat manifolds. Triangulations of such spaces were found in \cite{Lutz1999}. These can be used for comparison by bistellar flips.
\end{proof}

\subsection*{Combinatorial Morse analysis on $(K^4)_{16}$}
\label{ssec:morseAnaK4}

In this section we will use the field $ F := \mathbb{F}_2 $ because the Kummer variety has 2-torsion in the homology, see Equation (1.1). Since $(K^4)_{16}$ is not a combinatorial manifold we cannot apply critical point theory as easily as for the $K3$ surface. The reason is that now all vertex links are distinct from combinatorial $3$-spheres. This implies that duality does no longer hold. Moreover it has the following consequence somehow against our intuition about Morse theory: Slicings below a non-critical point do not necessarily have to be homeomorphic to the ones above the same non-critical point. Moreover, $(K^4)_{16}$ is not a tight triangulation. A tight triangulation of a simply connected space (manifold or not) must be 3-neighborly but $(K^4)_{16}$ is not because of $f_2 = 400 < {{16}\choose 3}$. In particular not all rsl-functions are perfect functions. 

\bigskip
From the $\mathbb{F}_2$-Betti numbers $b_0 = 1, \ b_1 = 0, \ b_2 = 11, \ b _3 = 5, \ b_4 = 1$ of the Kummer variety we expect that any rsl-functions has $18$ or more critical points, counted with multiplicity. The question is whether there is a perfect rsl-function on this triangulation which in addition fits the symmetry of the complex. This would be an excellent candidate for visualizing the space $(K^4)_{16}$ by various 3-dimensional slicings.

\begin{prop}
	As slicings associated with perfect functions on $(K^4)_{16}$ we obtain at least the manifolds
	$$\mathbb{R}P^3,\mathbb{R}P^3 \# \mathbb{R}P^3,
	\mathbb{R}P^3 \# \mathbb{R}P^3 \# \mathbb{R}P^3,
	S^2 \times S^1 \# \mathbb{R}P^3 \# \mathbb{R}P^3$$
	and the $3$-torus.
\end{prop}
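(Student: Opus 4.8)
The plan is to parallel the proof of the preceding proposition: exhibit concrete perfect rsl-functions by prescribing orderings of the $16$ vertices that respect the symmetry of $(K^4)_{16}$, and then read off the isotopy types of the intermediate slicings. Two structural facts serve as anchors. First, every vertex of $(K^4)_{16}$ is one of the $16$ singular points, and a neighborhood of such a point is a cone over $\mathbb{R}P^3$; hence the slicing immediately past the lowest vertex is a copy of its link, namely $\mathbb{R}P^3$. Second, the symmetric $3$-torus already present in $\mathbb{T}^4$ can be chosen to avoid all $16$ fixed points and therefore descends to a $\mathbb{T}^3$-slicing of $K^4$, exactly as in the $(K3)_{16}$ case; I would use the order-$2$ automorphism interchanging the two halves of the vertex set to place this $\mathbb{T}^3$ in the middle and analyze the slicings on either side of it.

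Before identifying the slicings I must certify that the chosen ordering indeed gives a perfect function, i.e. that the total number of critical points (with multiplicity, over $\mathbb{F}_2$) equals $\sum_i b_i = 1+0+11+5+1 = 18$, distributed as $\mu_0=\mu_4=1$, $\mu_1=0$, $\mu_2=11$, $\mu_3=5$. For this I would compute, at each vertex $v$, the lower link $\operatorname{lk}^-(v)$ and its reduced $\mathbb{F}_2$-homology, using $\dim H_i(M_v,M_v\setminus\{v\};\mathbb{F}_2)=\dim \tilde H_{i-1}(\operatorname{lk}^-(v);\mathbb{F}_2)$. The decisive requirement is $\mu_1=0$, i.e. that the lower link of every non-minimal vertex be connected; since $(K^4)_{16}$ is only $2$-neighborly (not $3$-neighborly), this is not automatic as it was for $(K3)_{16}$ and has to be verified ordering by ordering, with the automorphism group of order $1920$ cutting down the casework.

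The identification of the slicings then proceeds from the bottom up. A slicing with a single vertex below it is the link $\mathbb{R}P^3$ of that singular point. With two or three singular points below, the sublevel set is, up to PL-homeomorphism, a boundary connected sum of cones over $\mathbb{R}P^3$, so its boundary slicing is $\mathbb{R}P^3\#\mathbb{R}P^3$ and $\mathbb{R}P^3\#\mathbb{R}P^3\#\mathbb{R}P^3$ respectively; the connected-sum structure arises because the $2$-neighborly edges joining the cone points glue the corresponding $\mathbb{R}P^3$'s along balls. The summand $S^2\times S^1$ in $S^2\times S^1\#\mathbb{R}P^3\#\mathbb{R}P^3$ appears when, for a suitable ordering, the growing sublevel set acquires a nontrivial $1$-handle in its boundary, i.e. a tube forming between two previously separated pieces; this is the first genuinely non-spherical modification below the central $\mathbb{T}^3$. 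Finally, each concrete slicing extracted from an ordering would be identified up to PL-homeomorphism by reducing it with bistellar flips to a known small triangulation of the candidate space form, exactly the comparison method (via the triangulations in \cite{Lutz1999}) used at the end of the previous proof.

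The main obstacle is twofold. First, proving perfectness: because $(K^4)_{16}$ is not $3$-neighborly and is not a tight triangulation, the vanishing $\mu_1=0$ and the precise index distribution are delicate and must be established by the explicit lower-link computation rather than inferred from neighborliness. Second, recognizing the connected sums -- above all the appearance of the $S^2\times S^1$ summand -- is hard to do in closed form; since deciding the PL type of a triangulated $3$-manifold is in general only accessible experimentally, I expect this step to rest on the bistellar-flip matching against a library of space forms rather than on a purely theoretical argument.
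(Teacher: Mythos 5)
Your overall strategy is the same as the paper's: the paper takes the single symmetric ordering $f_{\{1,\ldots,16\}}\colon i \mapsto \frac{i-1}{15}$, certifies perfectness by computing the multiplicity vectors (i.e.\ lower-link homology) at every vertex, identifies the extreme slicing as the vertex link $\mathbb{R}P^3$, the middle slicing as $\mathbb{T}^3$ coming from the $4$-torus construction, and identifies the intermediate slicings computationally (the comparison against known small triangulations via bistellar flips, as in \cite{Lutz1999}). Your plan to verify $\mu_1=0$ by explicit lower-link computation, your warning that neighborliness arguments are unavailable here, and your bottom-up identification of the $\mathbb{R}P^3$ connected sums all match the paper; the paper additionally exhibits a non-perfect function $f_{\{1,4,6,2,3,5,7,\ldots,16\}}$ as a contrast, which your proposal does not need.

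However, there is one genuine conceptual error: your proposed mechanism for the $S^2\times S^1$ summand. You describe it as the sublevel set acquiring ``a nontrivial $1$-handle in its boundary, i.e.\ a tube forming between two previously separated pieces.'' A tube -- whether joining two components or a component to itself -- is precisely the level change across a critical point of index $1$, and this is excluded by the very hypothesis of the statement: since $b_1\bigl((K^4)_{16};\mathbb{F}_2\bigr)=0$, a perfect function has $\mu_1=0$, a requirement you yourself call decisive. In particular all slicings of a perfect function are connected (a disconnected lower link is exactly an index-$1$ contribution), so ``previously separated pieces'' never exist. What actually happens, as the paper's multiplicity table shows, is that vertex $4$ is critical of index $2$ with $\mathbf{m}(4,\mathbb{F}_2)=(0,0,1,0,0)$: the passage from $\mathbb{R}P^3\#\mathbb{R}P^3\#\mathbb{R}P^3$ to $(S^2\times S^1)\#2(\mathbb{R}P^3)$ is a surgery on a circle in the level $3$-manifold (turning one $\mathbb{R}P^3=L(2,1)$ summand into $S^2\times S^1$ by regluing a solid torus), not a tubing. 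Also note that the $\mathbb{R}P^3$ summands themselves accumulate at \emph{non-critical} vertices (vertices $2$ and $3$ have multiplicity vector zero), which is possible only because the vertex links are not spheres -- the phenomenon the paper highlights as counterintuitive for pseudomanifolds. Your computational fallback (bistellar identification) would still produce the correct list, but the heuristic as you state it cannot occur for the functions the proposition is about.
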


\begin{proof}
	There is the following perfect rsl-function $f_{\{1 , \ldots , 16 \}}$ given by
	\[ \textstyle f_{\{1 , \ldots , 16 \}} : (K^4)_{16} \to \left[ 0 , 1
	\right]; 
	\quad i \mapsto \frac{i-1}{15}. \]
	As we already know, the first and the last slicing represent the 	link of the vertex $1$ (or $16$, resp.) and are, therefore, 	combinatorial real projective $3$-spaces. 	Furthermore, the middle slicing 	$f_{\{1 , \ldots , 16 \}}^{-1}(\frac{1}{2})$ 	is homeomorphic to the $3$-torus which is more or less immediate 	from the construction of $(K^4)_{16}$ as the 4-torus modulo the 	central involution. 	The other slicings are connected sums of 	$\mathbb{R}P^3$ and $S^2 \times S^1 $. 	They are listed in Table \ref{tab:toptypeskummer}, 	the multiplicity vectors are shown in Table \ref{tab:kummer}.
	
	\begin{table}[H]
		\begin{center}
			\setlength{\extrarowheight}{0.1cm}
			\begin{tabular}{|c|c|c|} \hline
				level of $f_{\{1 , \ldots , 16 \}}$& type & slicing in between \\
				\hline
				$\frac{1}{30}$&$\mathbb{R}P^3 $&$\{1\}$ and $\{2,\ldots ,16\}$\\
				$\frac{1}{10}$&$\mathbb{R}P^3\#\mathbb{R}P^3$&$\{1,2\}$ 	and $\{3,\ldots ,16\}$\\
				$\frac{1}{6}$&$\mathbb{R}P^3\#\mathbb{R}P^3\# \mathbb{R}P^3$&$\{1,\ldots ,3\}$ and $\{4,\ldots ,16\}$\\
				$\frac{7}{30}$&$(S^2\times S^1)\# 2(\mathbb{R}P^3)$&	$\{1,\ldots ,4\}$ and $\{5,\ldots ,16\}$\\
				$\frac{1}{2}$&$\mathbb{T}^3$&$\{1,\ldots ,8\}$ and $\{9,\ldots ,16\}$\\[0.1cm] 
				\hline	 
			\end{tabular}
			\\[0.2cm]
			\caption{Slicings of $(K^4)_{16}$ by the perfect and symmetric 	rsl-function $f_{\{1 , \ldots , 16 \}}$\label{tab:toptypeskummer}}
		\end{center}
	\end{table}
	
	An example of an rsl-function which is not a perfect function
	is the function 
	\[ \textstyle f_{\{1,4,6,2,3,5,7,\ldots,16\}} : (K^4)_{16} \to \left[ 0 , 1
	\right]. \]
	This admits an empty triangle 	on one side leading to a critical point of index 1. 	In fact $f_{\{1,4,6,2,3,5,7,\ldots,16\}}$ has precisely 20 critical points, counted 	with multiplicity, see Table \ref{tab:kummer}.
\end{proof}

\begin{table}[H]
	{\small 
	\begin{center}
		\setlength{\extrarowheight}{0.1cm}
		\begin{tabular}{|c||cc||cc|} \hline
			& \multicolumn{2}{|l||}{$f_{\{1,\ldots,16\}}$}& \multicolumn{2}{|l|}{$f_{\{1,4,6,2,3,5,7,\ldots,16\}}$} \\ 
			\hline
			level&$v$&$\mathbf{m}(v,\mathbb{F}_2)$&$v$&$\mathbf{m}(v,\mathbb{F}_2)$\\	
			\hline
			$0$&$1$&$(1,0,0,0,0)$&$1$&$ (1,0,0,0,0) $\\
			$\frac{1}{15}$&$2$&$(0,0,0,0,0)$&$4$&$ (0,0,0,0,0) $\\
			$\frac{2}{15}$&$3$&$(0,0,0,0,0)$&$6$&$ (0,1,0,0,0) $\\
			$\frac{3}{15}$&$4$&$(0,0,1,0,0)$&$2$&$ (0,0,1,0,0) $\\
			$\frac{4}{15}$&$5$&$(0,0,0,0,0)$&$3$&$ (0,0,1,0,0) $\\
			$\frac{5}{15}$&$6$&$(0,0,1,0,0)$&$5$&$ (0,0,1,0,0) $\\
			$\frac{6}{15}$&$7$&$(0,0,1,0,0)$&$7$&$ (0,0,1,0,0) $\\
			$\frac{7}{15}$&$8$&$(0,0,1,1,0)$&$8$&$ (0,0,1,1,0) $\\
			$\frac{8}{15}$&$9$&$(0,0,0,0,0)$&$9$&$ (0,0,0,0,0) $\\
			$\frac{9}{15}$&$10$&$ (0,0,1,0,0)$&$10$&$ (0,0,1,0,0) $\\
			$\frac{10}{15}$&$11$&$(0,0,1,0,0)$&$11$&$ (0,0,1,0,0) $\\
			$\frac{11}{15}$&$12$&$(0,0,1,1,0)$&$12$&$ (0,0,1,1,0) $\\
			$\frac{12}{15}$&$13$&$(0,0,1,0,0)$&$13$&$ (0,0,1,0,0) $\\
			$\frac{13}{15}$&$14$&$(0,0,1,1,0)$&$14$&$ (0,0,1,1,0) $\\
			$\frac{14}{15}$&$15$&$(0,0,1,1,0)$&$15$&$ (0,0,1,1,0) $\\
			$1$&$16$&$(0,0,1,1,1)$&$16$&$ (0,0,1,1,1) $\\ 
			\hline
		\end{tabular}
		\\[0.2cm]
		\caption{Multiplicity vectors of two rsl-functions $f_{\{1,\ldots,16\}}$ and $f_{\{1,4,6,2,3,5,7,\ldots,16\}}$ on $(K^4)_{16}$ \label{tab:kummer}}
	\end{center}}
\end{table}

%%%%%%%%%%%%%%%%%%%%%%%%%%%%%%%%%%%%%%%%%%%%%%%%%%%%%%%%%%%%%%%%%%%%%%%%%%%%%%%
%%%%%%%%%%%%%%%%%%%%%%%%%%%%% FURTHER RESULTS %%%%%%%%%%%%%%%%%%%%%%%%%%%%%%%%%
%%%%%%%%%%%%%%%%%%%%%%%%%%%%%%%%%%%%%%%%%%%%%%%%%%%%%%%%%%%%%%%%%%%%%%%%%%%%%%%
\section{Further Results}
\label{other}

In each of the 16 steps of the dilatation process for the Kummer variety we have the choice between two orientations. Consequently for the resulting non-singular manifold at the end there are a number of different topological types which are possible. One can describe these by the intersection form.

\begin{prop}
	One can construct some combinatorial $4$-manifolds realizing any of the intersection forms of rank $22$ and signature $2n$, $n \in \{ 0 , \ldots , 8 \} $ from the triangulated $4$-dimensional Kummer variety $K^4$ by $16$ simplicial blowups, except for $19(\mathbb{C}P^2) \# 3(-\mathbb{C}P^2)$ and, possibly, $11(S^2 \times S^2)$. 
\end{prop}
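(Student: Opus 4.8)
The plan is to reduce the statement to the classification of indefinite unimodular forms together with a short list of explicit blowup sequences, and then to isolate the two anomalous forms by a parity (spin) argument. Throughout write $U=\left(\begin{smallmatrix}0&1\\1&0\end{smallmatrix}\right)$ for the hyperbolic plane. First I would record what the sixteen blowups do. Each simplicial blowup glues a copy of $\mathscr{C}$ (a disk bundle over $S^2$ of Euler number $\pm 2$) onto the complement of a singular star, raises $b_2$ by $1$, keeps the complex simply connected (since $K^4$ already is and $\mathscr{C}$ is simply connected, a van Kampen argument over the $\mathbb{R}P^3$-link kills the $\mathbb{Z}_2$), and changes $\operatorname{sign}(Q)$ by $+1$ or $-1$ according to the orientation of $\mathscr{C}$. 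After the sixteenth step every singularity is resolved, so the outcome $X$ is a closed, simply connected combinatorial (hence smooth) $4$-manifold with $b_2=22$. Writing $k$ for the number of blowups performed with the ``positive'' orientation, one has $\operatorname{sign}(Q(X))=k-(16-k)=2k-16$, so signature $2n$ occurs precisely for $k=n+8$, whence $n\in\{0,\dots,8\}$. Since $\operatorname{rank}Q(X)=22>|2n|$, the form is indefinite, and by the classification of indefinite unimodular lattices it is determined by its rank, signature and type. Thus everything reduces to deciding which parities (even/odd) are attainable for each $n$.

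Next I would dispose of the middle signatures $n=1,\dots,7$ at no cost. There $2n$ is not divisible by $16$; a simply connected smooth $4$-manifold with even form is spin, so Rokhlin's theorem would force $16\mid\operatorname{sign}$. Hence $Q(X)$ must be odd, i.e. the standard form $(11+n)\langle 1\rangle\oplus(11-n)\langle -1\rangle=(11+n)(\mathbb{C}P^2)\#(11-n)(-\mathbb{C}P^2)$, and every orientation pattern with $k=n+8$ realizes it; no computation beyond the signature count is needed. The genuine content sits at the two extremes $n\in\{0,8\}$, where $16\mid 2n$ and both an even and an odd form are smoothable. For $n=8$ one is forced to take $k=16$: then every exceptional sphere has self-intersection $+2$, so $X$ is the minimal resolution of the Kummer variety taken with reversed orientation, which by the theorem of Hironaka and Spanier is $\pm K3$ and therefore carries the even form $2\,\mathbb{E}_8\oplus 3U$. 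Since $k=16$ is the only pattern of signature $16$, the odd form $19(\mathbb{C}P^2)\#3(-\mathbb{C}P^2)$ can never arise --- this is the clean structural reason for the first exclusion, while the even form is realized by the main theorem itself. For $n=0$ I would exhibit one explicit $8/8$ pattern and confirm with polymake that its form is odd, yielding $11(\mathbb{C}P^2)\#11(-\mathbb{C}P^2)$.

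The hard part is the even signature-$0$ form $11(S^2\times S^2)$. Realizing it requires an $8/8$ resolution whose unimodular completion is even. The natural test is via the half-classes $\tfrac12\sum_{i\in S}e_i$ of the Kummer lattice, a class of self-intersection $|S\cap T|-\tfrac{|S|}{2}$ where $T$ is the set of positively blown-up nodes and $S$ runs over the weight-$8$ (and weight-$16$) supports; evenness amounts to $T$ being orthogonal mod $2$ to all of these, i.e. $T\in\mathrm{RM}(2,4)$, and such weight-$8$ vectors $T$ do exist. This makes an even sign-$0$ form look attainable, but the argument is only heuristic: for mixed orientations it is not clear that the glued manifold realizes the same overlattice, so the parity must in fact be read off the actual triangulation by polymake, pattern by pattern. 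Settling this --- either by producing one even $8/8$ triangulation or by exhausting the symmetry-reduced patterns --- is the step I expect to be the real obstacle, and its being unresolved is exactly why $11(S^2\times S^2)$ appears only as a possible, rather than a definite, exception.
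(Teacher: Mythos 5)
Your proposal follows essentially the same route as the paper's proof: the signature count $\operatorname{sign}(Q)=2k-16$ over orientation patterns, Rokhlin's theorem forcing an odd form for $n\in\{1,\dots,7\}$, the forced all-positive pattern at $n=8$ yielding the $K3$ surface and thereby excluding $19(\mathbb{C}P^2)\,\#\,3(-\mathbb{C}P^2)$, one explicit mixed pattern verified (by computing the form) to be odd at $n=0$, and leaving $11(S^2\times S^2)$ open. The Kummer-lattice/Reed--Muller parity heuristic you add for the open case goes beyond the paper, but as you yourself note it is not conclusive, so the substance of the argument coincides with the published one.
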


\begin{proof}
	The case $n=8$ was already treated in Section~\ref{sec:K4toK3}. In this case the orientation was uniquely determined in every step by the one in the first step. Therefore the construction is essentially unique (up to the orientation in the first blowup) and leads to the $K3$ surface. In particular the manifold $19(\mathbb{C}P^2) \# 3(-\mathbb{C}P^2)$ cannot be obtained in this way.

	\medskip
	The signature of an even intersection form of a simply connected PL $4$-manifold is divisible by $16$ by Rohlin's Thm., cf. \cite{Freedman1976}. It follows that for $n \in \{ 1,\ldots ,7 \} $ we have an odd intersection form. In these cases the manifold is homeomorphic to
	\begin{equation*}
	 k(\mathbb{C}P^2) \# l(- \mathbb{C}P^2)
	\end{equation*}
	where $ k - l = \pm 2n $, $n \in \{ 1 , \ldots , 7 \} $ and $k+l=22$. In the case $n=0$ the construction is not unique: The pattern of the orientations of all $16$ blowups is not determined since there are $8$ positive and $8$ negative blowups distributed arbitrarily in $K^4$. An odd intersection form was obtained by one particular sequence. This leads to the manifold $11(\mathbb{C}P^2) \# 11(-\mathbb{C}P^2)$.
\end{proof}

\medskip 
The question whether or not the manifold $11(S^2 \times S^2)$ can also be obtained by this construction remains open at this point. It must also be left open whether or not any of the other manifolds with a 22-dimensional second homology admits a triangulation with only 16 vertices. By \cite[Thm.4.9]{Kuehnel95TightPolySubm} such a 16-vertex triangulation would have to be 3-neighborly and would, by the Dehn-Sommerville equations, have the same $f$-vector as $(K3)_{16}$ and, thus, would give a solution to Problem \ref{prob2} in the introduction. Further experiments in this direction could possibly produce such an example. This is still work in progress.

In the case of $10$ vertices and $\chi = 4$ the combinatorial data corresponds to three topological types of simply connected $4$-manifolds, namely $S^2 \times S^2$, $\mathbb{C}P^2 \# \mathbb{C}P^2$ and $\mathbb{C}P^2 \# (-\mathbb{C}P^2)$. These would be candidates for a solution to Problem \ref{prob1}. However, it was shown in \cite{Kuehnel83Uniq3Nb4MnfFewVert} that in fact none of the topological manifolds above has a combinatorial triangulation with only $10$ vertices.

\medskip
More details of the combinatorial processes described above are available from \texttt{http://www.igt.uni-stuttgart.de/LstDiffgeo/Spreer/k3}. Moreover, most of the algorithms used to compute simplicial blowups and multiplicity vectors of rsl-functions are planned to be available soon within the GAP-package \texttt{simpcomp} \cite{simpcomp}, maintained by Effenberger and the first author. 

\bigskip
Acknowledgement: This work was partially 
supported by the Deutsche Forschungsgemeinschaft (DFG)
under the grant Ku 1203/5-2.

%%%%%%%%%%%%%%%%%%%%%%%%%%%%%%%%%%%%%%%%%%%%%%%%%%%%%%%%%%%%%%%%%%%%%%%%%%%%%%%
%%%%%%%%%%%%%%%%%%%%%%%%%%%%% BIBLIOGRAPHY %%%%%%%%%%%%%%%%%%%%%%%%%%%%%%%%%%%%
%%%%%%%%%%%%%%%%%%%%%%%%%%%%%%%%%%%%%%%%%%%%%%%%%%%%%%%%%%%%%%%%%%%%%%%%%%%%%%%
\addcontentsline{toc}{chapter}{Bibliography}
\bibliographystyle{amsplain}
\setlength{\baselineskip}{0pt}
\providecommand{\bysame}{\leavevmode\hbox to3em{\hrulefill}\thinspace}
\providecommand{\MR}{\relax\ifhmode\unskip\space\fi MR }
\providecommand{\MRhref}[2]{%
 \href{http://www.ams.org/mathscinet-getitem?mr=#1}{#2}
}
\providecommand{\href}[2]{#2}

\small 

Institut f\"ur Geometrie und Topologie

Universit\"at Stuttgart

70550 Stuttgart

Germany
\end{document}